\DeclareRobustCommand\widecheck[1]{{\mathpalette\@widecheck{#1}}}
\def\@widecheck#1#2{%
    \setbox\z@\hbox{\m@th$#1#2$}%
    \setbox\tw@\hbox{\m@th$#1%
       \widehat{%
          \vrule\@width\z@\@height\ht\z@
          \vrule\@height\z@\@width\wd\z@}$}%
    \dp\tw@-\ht\z@
    \@tempdima\ht\z@ \advance\@tempdima2\ht\tw@ \divide\@tempdima\thr@@
    \setbox\tw@\hbox{%
       \raise\@tempdima\hbox{\scalebox{1}[-1]{\lower\@tempdima\box
\tw@}}}%
    {\ooalign{\box\tw@ \cr \box\z@}}}
\newcommand\nthalias[1]{\AddToHook{env/#1/begin}{\crefalias{lemma}{#1}}}
\crefname{section}{Section}{Sections}
\crefname{subsection}{\S}{\S\S}
\crefname{subsubsection}{\S}{\S\S}
\theoremstyle{plain}
\newtheorem{lemma}{Lemma}[section]
\newtheorem{proposition}[lemma]{Proposition}
\newtheorem{theorem}[lemma]{Theorem}
\theoremstyle{plain}
\theoremstyle{plain}
\newtheorem{definition}[lemma]{Definition}
\newtheorem{example}[lemma]{Example}
\newtheorem{remark}[lemma]{Remark}
\newtheorem{remarks}[lemma]{Remarks}
\crefname{definition}{definition}{definitions}
\crefname{ex}{example}{examples}
\crefname{exs}{example}{examples}
\crefname{remark}{remark}{remarks}
\crefname{remarks}{remark}{remarks}
\crefname{convention}{convention}{conventions}
\crefname{notation}{notation}{notations}
\crefname{table}{table}{tables}
\crefname{lemma}{lemma}{lemmas}
\crefname{proposition}{proposition}{propositions}
\crefname{propositionN}{proposition}{propositions}
\crefname{corollary}{corollary}{corollaries}
\crefname{corollaryN}{corollary}{corollaries}
\crefname{theorem}{theorem}{theorems}
\crefname{theoremN}{theorem}{theorems}
\crefname{enumi}{}{}
\crefname{assumption}{assumption}{Assumptions}
\crefname{construction}{construction}{Constructions}
\crefname{sketch}{sketch}{Sketches}
\crefname{recollection}{recollection}{Recollectiones}
\crefname{question}{question}{Questions}
\crefname{equation}{}{}
\numberwithin{equation}{section}
\theoremstyle{nonumberplain}
\newtheorem{proof}{Proof}
\newcommand\pf[1]{\newtheorem{#1}{Proof of \Cref{#1}}}
\newcommand\bC{{\mathbb C}}
\newcommand\bG{{\mathbb G}}
\newcommand\bH{{\mathbb H}}
\newcommand\bP{{\mathbb P}}
\newcommand\bQ{{\mathbb Q}}
\newcommand\bR{{\mathbb R}}
\newcommand\bS{{\mathbb S}}
\newcommand\bT{{\mathbb T}}
\newcommand\bU{{\mathbb U}}
\newcommand\bZ{{\mathbb Z}}
\newcommand\cA{{\mathcal A}}
\newcommand\cB{{\mathcal B}}
\newcommand\cC{{\mathcal C}}
\newcommand\cE{{\mathcal E}}
\newcommand\cF{{\mathcal F}}
\newcommand\cG{{\mathcal G}}
\newcommand\cH{{\mathcal H}}
\newcommand\cM{{\mathcal M}}
\newcommand\cP{{\mathcal P}}
\newcommand\cX{{\mathcal X}}
\newcommand\ol{\overline}
\newcommand\wt{\widetilde}
\DeclareMathOperator{\id}{id}
\DeclareMathOperator{\End}{\mathrm{End}}
\DeclareMathOperator{\im}{\mathrm{im}}
\newcommand{\cat}[1]{\textsc{#1}}
\newcommand{\qedhere}{\mbox{}\hfill\ensuremath{\blacksquare}}
\newcommand{\xrightarrowdbl}[2][]{%
  \xrightarrow[#1]{#2}\mathrel{\mkern-14mu}\rightarrow
}
\title{Equivariant Banach-bundle germs}
\author{Alexandru Chirvasitu}
\begin{document}

\date{}

\newcommand{\Addresses}{{
  \bigskip
  \footnotesize

  \textsc{Department of Mathematics, University at Buffalo}
  \par\nopagebreak
  \textsc{Buffalo, NY 14260-2900, USA}  
  \par\nopagebreak
  \textit{E-mail address}: \texttt{achirvas@buffalo.edu}


}}

\maketitle

\begin{abstract}
  Consider a continuous bundle $\mathcal{E}\to X$ of Banach/Hilbert spaces or Banach/$C^*$-algebras over a paracompact base space, equivariant for a compact Lie group $\mathbb{U}$ operating on all structures involved. We prove that in all cases homogeneous equivariant subbundles extend equivariantly from $\mathbb{U}$-invariant closed subsets of $X$ to closed invariant neighborhoods thereof (provided the fibers are semisimple in the Banach-algebra variant). This extends a number of results in the literature (due to Fell for non-equivariant local extensibility around a single point for $C^*$-algebras and the author for semisimple Banach algebras). The proofs are based in part on auxiliary results on (a) the extensibility of equivariant compact-Lie-group principal bundles locally around invariant closed subsets of paracompact spaces, as a consequence of equivariant-bundle classifying spaces being absolute neighborhood extensors in the relevant setting and (b) an equivariant-bundle version of Johnson's approximability of almost-multiplicative maps from finite-dimensional semisimple Banach algebras with Banach morphisms. 
\end{abstract}

\noindent \emph{Key words:
  Banach bundle;
  absolute neighborhood extensor;
  classifying space;
  equivariant CW-complex;
  equivariant bundle;
  join;
  paracompact;
  topological colimit
}

\vspace{.5cm}

\noindent{MSC 2020: 46L85; 55R91; 54D20; 46M20; 55R10; 55R15; 54C55; 55M15
  
}


\section*{Introduction}

\emph{Bundles} feature below in two related but distinct senses, to be distinguished by context:
\begin{itemize}[wide]
\item On the one hand there are the familiar (vector/fiber/principal)-bundle, always, here, \emph{locally trivial} \cite[Definition 4.7.2]{hus_fib}, background on which will be referenced variously from \cite{hjjm_bdle,hus_fib,ms-cc,steen_fib} and a number of other sources. Some matters of convenience:
  \begin{itemize}[wide]
  \item We freely (and sometimes tacitly) pass back and forth \cite[Assertion 18.3.4]{hjjm_bdle} between rank-$q$ vector or $q\times q$-matrix algebra bundles with their corresponding \emph{principal bundles} \cite[Definition 5.2.2]{hjjm_bdle} over $GL(q)$ or the projective general liner group $PGL(q):=GL(q)/\left(\text{central }\bC^{\times}\right)$ respectively.
    
  \item It is harmless by the selfsame \cite[Assertion 18.3.4]{hjjm_bdle} to furthermore work with \emph{Hermitian} vector bundles, i.e. \cite[\S 14.1]{ms-cc} those equipped with appropriately compatible Hilbert-space structures on their fibers. In principal-bundle language this has the effect of reducing the groups to $U(q)<GL(q)$ and $PU(q)<PGL(q)$ respectively. 
  \end{itemize}

\item On the other hand, there are the \emph{Banach bundles}
  \begin{equation*}
    \cA
    \xrightarrowdbl[\quad\text{continuous open}\quad]{\quad\pi\quad}
    X
    ,\quad
    \text{Banach-space fibers }\left(\cA_x:=\pi^{-1}x,\ \|\cdot\|_x\right)
  \end{equation*}
  of \cite[Definition II.13.4]{fd_bdl-1}, \cite[Definition 1.1]{dg_ban-bdl}, \cite[\S 3]{zbMATH03539905} and recalled below in \Cref{def:eqv.ban.bdl}. They \emph{can} be locally trivial in the sense of \cite[Remark II.13.9]{fd_bdl-1} but need not be. Throughout the paper they will for the most part be \emph{continuous} in the sense that the globally-defined norm
  \begin{equation}\label{eq:glob.nrm}
    \cA
    \xrightarrow{\quad\|\cdot\|\quad}
    \bR_{\ge 0}
    ,\quad
    \|\cdot\||_{\cA_x}
    :=
    \|\cdot\|_x
  \end{equation}
  is continuous (\emph{(F)} Banach bundles in \cite[post Definition 1.1, p.8]{dg_ban-bdl} terminology).

  In full generality (the \emph{(H)} Banach bundles of loc. cit.) \Cref{eq:glob.nrm} would be only \emph{upper semicontinuous} \cite[\S 1.3]{hk_shv-bdl}. Bundle base spaces and topological groups are always assumed Hausdorff absent explicit warnings.
\end{itemize}

Per standard conventions (e.g. \cite[Definition 2.2]{dupre_hilbund-1} or \cite[Definition IV.1.4.1]{blk} in a slightly different but related setting), Banach bundles are \emph{(sub-)homogeneous} if all fibers have the same finite dimension (or, respectively, there is an upper finite bound on fiber dimensions). For occasional added specificity, \emph{$n$-homogeneity} and \emph{$n$-sub-homogeneity} refer to the fibers being $n$ or $(\le n)$-dimensional respectively. 

By way of a recollection before stating some of the main results, we reprise the usual Banach-bundle definitions in a slightly expanded framework to afford \emph{equivariance} with respect to a group operating on the entire package, as this will be the framework of interest throughout the paper (cf. \cite[Definition 1.1]{dg_ban-bdl}).

\begin{definition}\label{def:eqv.ban.bdl}
  For a topological (usually compact) group $\bU$ a \emph{$\bU$-equivariant Banach bundle} over $\Bbbk\in \{\bC,\bR\}$ (shorthand\footnote{The term is intended to avoid the confusion that would likely be caused by \emph{$\bU$-bundle}: the latter might be understood as having $\bU$ for its \emph{structure group} \cite[Definition 4.5.1]{hus_fib}.}: \emph{Banach bundle$_{\bU}$}) consists of
  \begin{itemize}[wide]
  \item a continuous open $\bU$-equivariant map $\cE\xrightarrowdbl{\pi}X$ for topological spaces $\cE$ and $X$ equipped with continuous $\bU$-actions;

  \item with the \emph{fibers} $\cE_x:=\pi^{-1}x$, $x\in X$ carrying Banach-space structures compatible with the $\bU$-actions (which thus restrict to jointly-continuous representations $\bU\times \cE_x\to \cE_x$);

  \item so that scalar multiplication and addition are continuous;

  \item and the norm on $\cE$ is continuous (for \emph{continuous} or \emph{(F)} Banach bundles) or upper semicontinuous (for \emph{(H)} Banach bundles). 
  \end{itemize}
  Equivariant Hilbert-space or Banach-algebra or $C^*$-algebra bundles are defined similarly, with the fibers $\cE_x$ carrying those richer structures instead. 
\end{definition}

The common theme unifying the various threads of the discussion below is the extent to which singularities of Banach/Banach-algebra/$C^*$ bundles preclude the existence of better-behaved (e.g. locally-trivial) larger ambient bundles or, alternatively, subbundles. Several situations come to mind where understanding Banach-bundle (mis)behavior at and around fiber-dimension jumps is at the very least useful.
\begin{enumerate}[(a),wide]
\item\label{item:motiv.brnch} The \emph{non-commutative branched covers} introduced in \cite[Deﬁnition 1.2]{pt_brnch} are unital $C^*$ embeddings $B\le A$ which satisfying convenient finiteness conditions (being of \emph{finite index} \cite[Definition 2]{fk_fin-ind}) not recalled here explicitly. Specialized to commutative $B$, they will be subhomogeneous $C^*$ bundles studied in \cite{bg_cx-exp,2409.17807v1,2409.03531v2}.
  
  It is in that setting that the embeddability of a subhomogeneous $C^*$ bundle into a homogeneous one becomes relevant: homogeneity (hence also homogeneous embeddability) will ensure \cite[Proposition 3.4]{bg_cx-exp} the desired finiteness constraints hold, whereas they need not if the subhomogeneous bundle's fiber-dimension jump loci are sufficiently ill-behaved (per \cite[Theorem 1.3]{2409.17807v1} addressing \cite[Problem 3.11]{bg_cx-exp}, say). 

\item\label{item:motiv.sph} An adjacent theme to embeddability into homogeneous bundles is their \emph{prolongation} from closed subsets of the base space; this is what the title's \emph{germs} refer to: the term usually refers to objects (sheaf sections \cite[p.2]{bred_shf_2e_1997}, functions \cite[\S 27.18]{km_glob}) defined locally around a target (point, closed subset, etc.), identified if in agreement on a sufficiently small neighborhood of that target.

  One instance of the problem (which did in fact provide motivation for the present work) arises in relation to studying the \emph{non-commutative spheres} $\bS_{\theta}^{2n-1}$ introduced in \cite[Definition 2.1]{no_sph} as objects dual to the unital $C^*$-algebras generated universally by
  \begin{equation*}    
    \text{normal }T_k,\quad k\in \left\{1..n\right\}
    \quad:\quad
    \left[
    \begin{aligned}
      T_{\ell} T_k
      &=
        \exp\left(2\pi i \theta_{k\ell}\right) T_k T_{\ell},\quad\forall k,\ell\\
      \sum_{k=1}^n T_k^* T_k
      &=1
    \end{aligned}
    \right.
  \end{equation*}
  for skew-symmetric $\theta\in M_n(\bR)$. The latter is the same type of deformation parameter featuring in the definition of the \emph{non-commutative tori} $\bT^n_{\theta}$ (\cite[p.193]{rief_case}, \cite[\S 12.2]{gbvf_ncg}), the difference being that for these the generators $U_k$ would be unitary rather than normal.

  For rational deformation parameters $\theta\in M_n(\bQ)$ the $\bS^{2n-1}_{\theta}$ are non-commutative branched covers over classical spheres $\bS^{2n-1}$ in the sense of \Cref{item:motiv.brnch} (e.g. by a conjunction of \cite[Theorem B]{2508.04922v1} and \cite[Theorem A]{2409.03531v2}), particularly pleasant for $n=2$ \cite[Proposition 4.5]{cp_cont-equiv_xv3}:
  \begin{equation*}
    \cA\xrightarrowdbl{\quad \pi\quad}\bS^3
    ,\quad
    \cA_x\cong
    \begin{cases}
      M_q&x\not\in h^{-1}\left(p_{\pm}\right)\\
      \bC^q&\text{otherwise},
    \end{cases}
    ,\quad
    \begin{aligned}
      \bS^3
      &\xrightarrowdbl[\ \text{Hopf fibration}\ ]{\quad h\quad}\bS^2\\
      p_{\pm}&:=\text{antipodal pair}
    \end{aligned}    
  \end{equation*}
  where $q$ is the lowest-terms denominator of the rational $\theta_{12}$.

  It is useful, in computing the dimension invariants attached by \cite[Definitions 3.1 and 3.20]{hajacindex_xv2} (also \cite[Deﬁnition 2.3]{cpt_dim}) to finite-group actions on $\bS^{2n-1}_{\theta}$, to extend the $\bC^q$-fibered restriction of $\cA$ from one of the exceptional circles to a neighborhood thereof, equivariantly, as a piece of auxiliary scaffolding; \Cref{th:prlng.homog.abdls} affords this.
\end{enumerate}

To proceed to an outline of the paper's contents, the first extensibility result considered below reads as follows. 

\begin{theorem}\label{th:prlng.homog.bdls}
  \begin{enumerate}[(1),wide]
  \item Let $\bU$ be a compact Lie group, $\cE\xrightarrowdbl{\hspace{0pt}} X$ a continuous Banach bundle$_{\bU}$ over a paracompact base space, $Z\subseteq X$ a closed $\bU$-subset, and $\cF\le \cE|_Z$ a locally-trivial $n$-homogeneous subbundle$_{\bU}$.
    
    There are
    \begin{itemize}[wide]
    \item a closed $\bU$-invariant neighborhood $W\supseteq Z$ in $X$;
      
    \item and a locally-trivial $n$-homogeneous subbundle$_{\bU}$ $\wt{\cF}\xrightarrowdbl{\hspace{0pt}} W$ of $\cE|_W$;

    \item with $\wt{\cF}|_Z=\cF$. 
    \end{itemize}

  \item The analogous result holds also for continuous Hilbert bundles$_{\bU}$. 
  \end{enumerate}  
\end{theorem}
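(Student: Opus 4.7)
My plan is to recast the problem as two successive equivariant extension problems---first for a principal frame bundle, and then for its realization inside $\cE$. A locally trivial $n$-homogeneous Banach (resp.\ Hilbert) subbundle$_{\bU}$ $\cF\le \cE|_Z$ is equivalent data to a pair $(P,\iota)$, where $P\twoheadrightarrow Z$ is the $\bU$-equivariant principal $G$-bundle of frames of $\cF$ (with $G:=GL(n)$ in the Banach case and $G:=U(n)$ in the Hilbert case), and $\iota$ is a $\bU\times G$-equivariant continuous map
\[
\iota\colon P\longrightarrow \mathrm{Fr}_n(\cE)|_Z\subseteq \cE^{\oplus n}|_Z
\]
recording the inclusion of each frame into the ambient fiber. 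Here $\mathrm{Fr}_n(\cE)$ denotes the fiberwise open subspace of $\cE^{\oplus n}$ cut out by linear independence (resp.\ orthonormality); openness follows from continuity of the global norm \Cref{eq:glob.nrm} (and of the inner product in the Hilbert case).

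The first step invokes the equivariant principal-bundle extension theorem for compact Lie groups over closed invariant subsets of paracompact spaces---item (a) of the abstract, itself a consequence of the classifying spaces $B_{\bU}G$ being absolute neighborhood extensors in the relevant equivariant setting---to produce a closed $\bU$-invariant neighborhood $W_0\supseteq Z$ and a $\bU$-equivariant principal $G$-bundle $\wt P\to W_0$ with $\wt P|_Z\cong P$. This gives an abstract equivariant extension of $\cF$, but not yet as a subbundle of $\cE$.

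The second step extends $\iota$ to a $\bU\times G$-equivariant continuous map $\wt\iota\colon \wt P|_{W_1}\to \cE^{\oplus n}|_{W_1}$ on a closed $\bU$-invariant subneighborhood $W_1\subseteq W_0$ of $Z$. Cover $\wt P$ by equivariant Palais-type tubes $\bU\times_H S$ (with $H\le\bU$ a stabilizer and $S$ an $H$-slice); on each such tube, $\iota$ is prescribed over the $Z$-portion by finitely many $H$-equivariant continuous sections of $\cE$. Each such section extends to a neighborhood in $X$ by the Douady-Dal Soglio-Herault selection principle for continuous Banach bundles, is averaged over the compact group $H$ to restore equivariance, and is then propagated over the $\bU$- and $G$-directions by equivariance. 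A $\bU$-invariant partition of unity subordinate to an invariant refinement of the cover patches these local sections into a globally defined continuous $\wt\iota$ on $W_1$; convex combinations of sections of $\cE^{\oplus n}$ remain sections, though they may temporarily fail the frame condition away from $Z$.

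Finally, an openness argument recovers the frame property: $\wt\iota^{-1}(\mathrm{Fr}_n(\cE))$ is open in $\wt P|_{W_1}$ by continuity of $\wt\iota$, is $G$-invariant because change of basis preserves linear independence, and contains $P$. Since the principal-bundle projection $\wt P\to W_1$ is open and surjective, this set descends to an open $\bU$-invariant neighborhood of $Z$ in $W_1$, inside which one can find a closed $\bU$-invariant subneighborhood $W$ by paracompactness and $\bU$-averaging. The image of $\wt\iota$ over $W$ is then a locally trivial $n$-homogeneous $\bU$-equivariant subbundle $\wt\cF\le\cE|_W$ restricting to $\cF$ on $Z$. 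I expect the main obstacle to be the second step: keeping track of the three group actions ($\bU$, $G$, and the local stabilizers $H$) through the partition-of-unity patching while preserving both continuity and $\bU\times G$-equivariance. The slice theorem and invariant averaging make this tractable, but the bookkeeping is delicate, and the Hilbert case is more comfortable than the Banach one purely because $U(n)$ is compact whereas $GL(n)$ is not---in the latter case, one profits from reducing structure group by first choosing a Hermitian metric on $\cF$ via a paracompact partition-of-unity argument, and passing to the $U(n)$-frame bundle.
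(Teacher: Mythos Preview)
Your proposal is correct and follows the same overall architecture as the paper's proof: first invoke \Cref{pr:princ.bdl.ext} to extend the underlying principal frame bundle equivariantly to a neighborhood, then realize the abstract extension inside $\cE$ as an embedding, and finally shrink the neighborhood so the embedding stays injective (respectively isometric). The difference lies entirely in the second step.

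Where you work at the level of frames---extending a $\bU\times G$-equivariant map $\wt P\to\cE^{\oplus n}$ via slices, stabilizer-averaging, and an invariant partition of unity, while tracking three group actions simultaneously---the paper bypasses this bookkeeping by passing to the associated vector bundle $\cG:=\wt P\times_G\Bbbk^n$ and viewing the inclusion $\cF\hookrightarrow\cE|_Z$ as a single section of the Banach bundle $\cH om(\cG,\cE)\cong\cE\otimes\cG^*$ over $Z$. That section extends globally (non-equivariantly) by the standard \cite[p.15]{dg_ban-bdl} extension result, and a single Haar average over $\bU$ restores equivariance; the $G$-equivariance is absorbed into the Hom-bundle construction and never needs to be managed by hand. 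The remaining argument (injectivity on a shrunk neighborhood by norm-continuity, and in the Hilbert case replacing the embedding by the isometric factor of its polar decomposition) is the same as your openness-of-$\mathrm{Fr}_n(\cE)$ step in different clothing. Your route works, but the Hom-bundle trick is what makes the ``delicate bookkeeping'' you anticipate evaporate.
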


The multiplicative counterpart to \Cref{th:prlng.homog.bdls} is as follows. 

\begin{theorem}\label{th:prlng.homog.abdls}
  \begin{enumerate}[(1),wide]
  \item\label{item:th:prlng.homog.abdls:ban} Let $\bU$ be a compact Lie group, $\cA\xrightarrowdbl{\hspace{0pt}} X$ a continuous (unital) Banach-algebra bundle$_{\bU}$ over a paracompact base space, $Z\subseteq X$ a closed $\bU$-subset, and $\cB\le \cA|_Z$ a locally-trivial algebra subbundle$_{\bU}$ with finite-dimensional semisimple fibers.
    
    There are
    \begin{itemize}[wide]
    \item a closed $\bU$-invariant neighborhood $W\supseteq Z$ in $X$;
      
    \item and a locally-trivial subbundle$_{\bU}$ $\wt{\cB}\xrightarrowdbl{\hspace{0pt}} W$ of $\cA|_W$;

    \item with $\wt{\cB}|_Z=\cB$. 
    \end{itemize}

  \item\label{item:th:prlng.homog.abdls:cast} The analogous result holds also for continuous $C^*$ bundles$_{\bU}$. 
  \end{enumerate}  
\end{theorem}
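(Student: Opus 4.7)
The plan is to first apply the additive extension result \Cref{th:prlng.homog.bdls} to produce a locally-trivial Banach (resp.\ Hilbert) subbundle extending $\cB$ over a closed $\bU$-invariant neighborhood of $Z$, and then to perturb its fibers to genuine subalgebras using the equivariant, bundle-parametric version of B.E.\ Johnson's approximation theorem for almost-multiplicative maps out of finite-dimensional semisimple Banach algebras flagged in point (b) of the abstract.

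Concretely, I would first forget the multiplicative structure on $\cA$ and apply \Cref{th:prlng.homog.bdls}(1) to the underlying $\bU$-equivariant Banach bundle $\cA$, the closed $\bU$-subset $Z$, and the locally-trivial subbundle $\cB\le\cA|_Z$ (for part (2) using the Hilbert version, so that $\cF$ inherits a compatible fiberwise inner product). This yields a closed $\bU$-invariant neighborhood $W_0\supseteq Z$ and a locally-trivial $\bU$-equivariant subbundle $\cF\le\cA|_{W_0}$ with $\cF|_Z=\cB$, but a priori only linearly. Fixing a $\bU$-equivariant local trivialization of $\cF$ over a $\bU$-invariant open $U\subseteq W_0$ with model fiber a finite-dimensional semisimple Banach algebra $F$ (a model fiber of $\cB$) presents the inclusion $\cF|_U\hookrightarrow \cA|_U$ as a continuous $\bU$-equivariant family of linear maps $\phi_x:F\to\cA_x$, which are unital algebra homomorphisms precisely when $x\in U\cap Z$. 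By continuity of the multiplication in $\cA$ and finite-dimensionality of $F$, after shrinking $U$ to a closed $\bU$-invariant subneighborhood of $Z$ the $\phi_x$ become uniformly almost-multiplicative: $\|\phi_x(ab)-\phi_x(a)\phi_x(b)\|\le\varepsilon\|a\|\|b\|$ for any prescribed $\varepsilon>0$.

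I would then invoke the equivariant bundle-parametric Johnson-type theorem --- the key new input --- to produce, for $\varepsilon$ sufficiently small, a $\bU$-equivariant continuous family of unital algebra homomorphisms $\psi_x:F\to\cA_x$ close to $\phi_x$ and agreeing with $\phi_x$ over $Z\cap U$ (a relative/boundary form of the statement). The images $\psi_x(F)\subseteq\cA_x$ then assemble into a locally-trivial $\bU$-equivariant subalgebra of $\cA|_U$ extending $\cB|_{Z\cap U}$; these local extensions would be patched over $W_0$ by paracompactness, either through a $\bU$-equivariant partition of unity or, more structurally, via the absolute-neighborhood-extensor property for equivariant classifying spaces of the principal bundle of algebra embeddings $F\hookrightarrow\cA$ (point (a) of the abstract), producing $\wt{\cB}$ on a closed $\bU$-invariant neighborhood $W\subseteq W_0$. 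For part (2) I would additionally need $\wt{\cB}$ closed under involution: since $F$ is now a multimatrix $C^*$-algebra and every unital algebra embedding of $F$ into a unital $C^*$-algebra sufficiently close to a $*$-embedding is inner-unitarily equivalent to one, a $\bU$-equivariant continuous selection of near-identity unitaries in $\cA$ (e.g.\ via fiberwise polar decomposition, then averaged over $\bU$) would conjugate the subbundle produced above to a $*$-subbundle.

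The principal obstacle is the middle step: Johnson's classical perturbation statement is pointwise, so promoting it to a form that is continuous in $x$, $\bU$-equivariant, and equal to the prescribed $\phi_x$ on the closed $\bU$-set $Z$ is what the argument really hinges on. The globalization step, while requiring the auxiliary material on equivariant classifying spaces as absolute neighborhood extensors, should be a more standard paracompact-gluing issue, and the $C^*$-to-$*$ upgrade in part (2) is routine given the rigidity of multimatrix $*$-embeddings.
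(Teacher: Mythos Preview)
Your overall strategy---extend $\cB$ linearly first, then perturb to a multiplicative embedding via a Johnson-type argument---is exactly the paper's. The differences are in execution, and two of them matter.

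First, the paper avoids your local-then-patch detour. Rather than applying \Cref{th:prlng.homog.bdls} and then passing to local trivializations with a fixed model fiber $F$, it invokes \Cref{pr:princ.bdl.ext} directly with structure group $\Aut(B)$ to extend $\cB$ to an abstract locally-trivial \emph{algebra} bundle$_{\bU}$ $\cM$ over a neighborhood, and only then embeds $\cM$ linearly and equivariantly into $\cA$ (by the same section-extend-and-Haar-average argument underlying \Cref{th:prlng.homog.bdls}). This yields a single global bundle map $\iota:\cM|_W\to\cA|_W$ with $\cM$ carrying algebra structure throughout, so the Johnson step can be run once, globally. Your patching step is a genuine concern: convex combinations of algebra embeddings are not algebra embeddings, so partitions of unity do not apply, and the ANE route classifies abstract bundles rather than embedded ones (the fiber of ``embeddings $F\hookrightarrow\cA_x$'' varies with $x$, so it is not a fixed-fiber situation).

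Second, and more to the point: you correctly flag the parametric/equivariant/relative upgrade of Johnson as the crux, but the paper does not invoke it as a black box. Johnson's proof is an explicit Newton iteration $\iota_{n+1}=\iota_n+\iota_n(e_1)\,\iota_n^{\vee}(e_2,-)$, depending only on a separability idempotent $e=e_1\otimes e_2$ for the source algebra. The paper's auxiliary \Cref{pr:si.sect} shows that a finite-dimensional semisimple algebra carries a \emph{canonical}, automorphism-invariant separability idempotent, hence a global $\bU$-invariant section $e\in\Gamma(\cM^{\otimes 2})$. With that section the formula itself delivers continuity in $x$, $\bU$-equivariance, and agreement over $Z$ (where $\iota^{\vee}\equiv 0$, so the iteration is stationary) simultaneously---no separate relative statement is needed. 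For part \Cref{item:th:prlng.homog.abdls:cast} the paper symmetrizes the iteration to $\tau_{sa}=\tfrac{1}{2}(\tau+\tau^*)$ (using that the canonical $e$ can be chosen with $e^*=\sigma(e)$), rather than your post-hoc unitary conjugation; your route is plausible but would require its own equivariance and boundary-agreement bookkeeping that the symmetrized iteration gets for free.
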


\Cref{th:prlng.homog.abdls} generalizes \cite[Theorem 3.1]{fell_struct} (in the $C^*$ setting) and \cite[Proposition 2.8]{2409.03531v2} (for both $C^*$ and Banach-algebra bundles) by allowing equivariance and extensibility locally around arbitrary ($\bU$-invariant) closed subsets rather than single points. 

The proofs of \Cref{th:prlng.homog.bdls,th:prlng.homog.abdls} rely on a few technical asides presumably of some interest in their own right:
\begin{enumerate}[(a),wide]
\item\label{item:intro.equiv.bdls} There is, first, the ability to extend $\bU$-equivariant principal $\bG$-bundles locally around $\bU$-invariant closed subspaces $Z\subseteq X$ of paracompact $\bU$-spaces. This forms the object of \Cref{pr:princ.bdl.ext}, and is reminiscent of familiar results on he extensibility of (bundle or sheaf) \emph{sections} from closed subspaces: \cite[Theorem I.5.10]{kar_k_1978}, \cite[Th\'eor\`eme 3.3.1]{god_faisc_1958}, \cite[Proposition 1.1]{seg}, etc.

\item\label{item:intro.amnm} Secondly, in order to leverage the ``purely-linear'' \Cref{th:prlng.homog.bdls} into the ``multiplicative'' \Cref{th:prlng.homog.abdls}, we employ an equivariant bundle version of Johnson's principle \cite[Theorem 3.1, Corollary 3.2, Theorem 7.2]{john_approx} that almost-multiplicative linear ($*$-)maps from semisimple finite-dimensional Banach/$C^*$-algebras into arbitrary Banach/$C^*$-algebras are close to morphisms in the apposite category. This is one instance of what has since come to be termed \emph{Hyers-Ulam-Rassias stability} \cite[Introduction]{jung_hur-stab} and the family-wide, equivariant adaptation of Johnson's arguments can be regarded as fitting in that broad framework. 
\end{enumerate}

In reference to item \Cref{item:intro.equiv.bdls} above it becomes topical to contrast the various topologies one typically equips the total space of the \emph{(Milnor-model) universal bundle} $E\bG$ \cite[Definition 7.2.7]{hjjm_bdle} of a(n always Hausdorff) topological group. \Cref{se:colim.top} revolves around that issue:
\begin{itemize}[wide]
\item \Cref{th:disc.cone} observes that the principal action $E\bG\times \bG\to E\bG$ is discontinuous for the colimit topology on $E\bG$ whenever, say, $\bG\times -$ fails to preserve the topological quotient
  \begin{equation*}
    \bG\times [0,1]
    \xrightarrowdbl{\quad}
    \text{\emph{cone} }
    \cC \bG:=\bG\times [0,1]\big/\bG\times \{0\}.
  \end{equation*}
  Indeed, such groups fail to act continuously even on the truncated total space
  \begin{equation*}
    E_1\bG=\bG*\bG
    \underset{\text{closed}}{\subseteq}
    E\bG,
  \end{equation*}
  with `*' denoting the \emph{topological join} \cite[p.9]{hatch_at} recalled in \Cref{eq:x.ast.y}. As simple a group as $(\bQ,+)$ will exhibit this behavior: \Cref{pr:q.discont.act.join}.
  
\item while \Cref{th:lb.cc} notes that by contrast, a group with \emph{countably-compact} power $\bG^n$ always acts continuously on its colimit-topologized truncated universal space $E_n \bG=\bG^{*(n+1)}$. 
\end{itemize}

\subsection*{Acknowledgments}

I am grateful for illuminating comments and suggestions from B. Badzioch, M. Megrelishvili and M. Ramachandran.


\section{Banach-bundle extensibility and embeddability}\label{se:bdl.ext}

We refer the reader to some of the usual sources (cited as needed) for familiar point-set/algebraic-topology vocabulary (\emph{paracompact} spaces \cite[Definition 20.6]{wil_top}, \emph{CW-complexes} \cite[\S 8.3]{td_alg-top}, etc.). Algebras (and derivative notions: algebra morphisms, algebra bundles, etc.) are assumed unital barring explicit caution. We write
\begin{equation*}
  \Gamma_{\bullet}(\cE)
  \quad\text{or}\quad
  \Gamma_{\bullet}(\pi)
  ,\quad
  \bullet\in \{\text{blank},b,0\}
\end{equation*}
for arbitrary, bounded or, respectively, infinity-vanishing sections of a Banach bundle $\cE\xrightarrowdbl{\pi}X$ (the last only for locally compact $X$).


Throughout the paper,
\begin{equation}\label{eq:x.ast.y}
  X*Y
  :=
  X\times Y\times [0,1]
  \bigg/
  \left(
    \begin{aligned}
      (x,y,0)&\sim (x,y',0)\\
      (x,y,1)&\sim (x',y,1)\\
    \end{aligned}
  \right)
\end{equation}
denotes the \emph{join} \cite[\S 14.4.3]{td_alg-top} of two topological spaces. The quotient can be thought of as the space of convex combinations
\begin{equation*}
  t_X x+t_Y y
  ,\quad
  t_X,t_Y\in [0,1]
  ,\quad
  t_X+t_Y=1
  ,\quad
  x\in X
  ,\quad
  y\in Y,
\end{equation*}
equipped sometimes with the coarsest (\emph{coordinate}) topology making the functions
\begin{equation*}
  \begin{aligned}
    X*Y
    \ni
    t_X p_X+t_Y p_Y
    &\xmapsto{\quad c_{\bullet} \quad}
      t_{\bullet}\\
    c_{\bullet}^{-1}\left((0,1]\right)
    \ni
    t_X p_X+t_Y p_Y
    &\xmapsto{\quad\gamma_{\bullet}\quad}
      p_{\bullet}
  \end{aligned}      
  ,\quad
  \bullet\in \{X,Y\}
\end{equation*}
continuous and sometimes with the quotient topology resulting from \Cref{eq:x.ast.y}: cf. \cite[\S 14.4, Problem 10]{td_alg-top}. There is of course no discrepancy between the two when $X$ and $Y$ are compact Hausdorff, and the surrounding discussion will at any rate make it plain which version is in use. 

To amplify the Introduction's discussion,
recall two qualitatively distinct reasons why embedding a $C^*$ or Banach bundle into a locally-trivial one might not be possible.
\begin{itemize}[wide]
\item A topological obstruction of a bundle being locally trivial but in a sense ``not sufficiently so'' is illustrated in \cite[Example 3.6]{bg_cx-exp} (building on \cite[Example 3.5]{zbMATH05172034}). That discussion argues that a $C^*$ bundle $\cA$ over
  \begin{equation*}
    X:=\left(X_0:=\bigsqcup_{n\ge 1}\bC\bP^n\right) \sqcup\{\infty\}
    :=
    \text{\emph{one-point compactification} \cite[Problem 19A]{wil_top} of $X_0$},
  \end{equation*}
  locally trivial with fiber $M_2$ over $X_0$ and $\bC$ over $\infty$, fails to embed into a homogeneous $C^*$ bundle because $\cA|_{X_0}$ is not \emph{of finite type} \cite[Definition 3.5.7]{hus_fib}: trivialized by a finite open cover.

\item The $C^*$ bundle over $[-1,1]$ in \cite[Example 2.2]{2409.03531v2} on the other hand, trivial with fibers $\bC^2$ and $M_3$ respectively over $\{0\}$ and $[-1,1]\setminus \{0\}$ (so rather uninteresting topologically), is similarly homogeneously non-embeddable for algebraic reasons instead. We elaborate in \Cref{ex:not.same.trace}, expanding on the result just cited. 
\end{itemize}

\begin{example}\label{ex:not.same.trace}
  Consider closed embeddings
  \begin{equation*}
    \begin{tikzpicture}[>=stealth,auto,baseline=(current  bounding  box.center)]
      \path[anchor=base] 
      (0,0) node (l) {$X_0$}
      +(2,.5) node (u) {$X_-$}
      +(2,-.5) node (d) {$X_+$}
      +((4,0) node (al) {$A_0$}
      +(6,.5) node (au) {$A_-$}
      +(6,-.5) node (ad) {$A_+$}
      ;
      \draw[right hook->] (l) to[bend left=6] node[pos=.5,auto] {$\scriptstyle \iota_-$} (u);
      \draw[right hook->] (l) to[bend right=6] node[pos=.5,auto,swap] {$\scriptstyle \iota_+$} (d);
      \draw[right hook->] (al) to[bend left=6] node[pos=.5,auto] {$\scriptstyle \theta_-$} (au);
      \draw[right hook->] (al) to[bend right=6] node[pos=.5,auto,swap] {$\scriptstyle \theta_+$} (ad);
    \end{tikzpicture}
  \end{equation*}
  of compact Hausdorff spaces and finite-dimensional $C^*$-algebras respectively, and define a continuous $C^*$ bundle $\cA\xrightarrowdbl{\hspace{0pt}}X$ over the pushout $X:=X_-\sqcup_{X_0}X_+$ as that whose space $\Gamma(\cA)$ of sections is the pullback
  \begin{equation*}
    \begin{tikzpicture}[>=stealth,auto,baseline=(current  bounding  box.center)]
      \path[anchor=base] 
      (0,0) node (l) {$C_{\iota_-,\theta_-}(X_-,A_-)$}
      +(3,.5) node (u) {$\Gamma(\cA)$}
      +(3,-.5) node (d) {$C(X_0,A_0)$}
      +(6,0) node (r) {$C_{\iota_+,\theta_+}(X_+,A_+)$}
      ;
      \draw[->>] (u) to[bend right=6] node[pos=.5,auto] {$\scriptstyle $} (l);
      \draw[->>] (u) to[bend left=6] node[pos=.5,auto] {$\scriptstyle $} (r);
      \draw[->>] (l) to[bend right=6] node[pos=.5,auto,swap] {$\scriptstyle $} (d);
      \draw[->>] (r) to[bend left=6] node[pos=.5,auto,swap] {$\scriptstyle $} (d);
    \end{tikzpicture}
  \end{equation*}
  in the category of unital \emph{$C(X)$-algebras} \cite[Definition 1.5]{zbMATH04056334}, where
  \begin{equation*}
    C_{\iota_{\pm},\theta_{\pm}}(X_{\pm},A_{\pm})
    :=
    \left\{
      X_{\pm}
      \xrightarrow[\quad\text{continuous}\quad]{\quad f\quad}
      A_{\pm}
      \ :\
      \im f\circ \iota_{\pm}
      \subseteq
      \theta_{\pm}\left(A_0\right)
    \right\}
  \end{equation*}
  (a variant of the \emph{clutching construction} familiar in bundle theory: \cite[Definition 2.7.2]{hjjm_bdle}, \cite[Proposition 10.7.1]{hus_fib}, etc.; cf. also the projective-module gluing procedure of \cite[\S 2]{miln_k}). 

  $\cA$ has fiber $A_0$ over $X_0$ and respectively $A_{\pm}$ over $X_{\pm}$, and will not embed into any homogeneous locally-trivial $C^*$ bundle over $X$ provided
  \begin{equation*}
    X_0
    \cap
    \overline{\left(X_-\setminus X_0\right)}
    \cap
    \overline{\left(X_+\setminus X_0\right)}
    \ne
    \emptyset
  \end{equation*}
  and no tracial states on $A_{\pm}$ restrict to a common tracial state on $A_0$ along $\theta_{\pm}$. \cite[Example 2.2]{2409.03531v2} is what the present construction specializes to for
  \begin{itemize}[wide]
  \item $\iota_{\pm}$ being the 0-endpoint embeddings into $[0,1]$ and $[-1,0]$;
  \item and $\theta_{\pm}$ two embeddings $\bC^2\lhook\joinrel\to M_3$ picking out projections of different ranks. 
  \end{itemize}
  The reader can easily supply other like samples. 
\end{example}

\pf{th:prlng.homog.bdls}
\begin{th:prlng.homog.bdls}
  The Banach and Hilbert versions function mostly in tandem (the relevant bundle structure group is in both cases $\bG:=U(n)$), up to the one distinction noted at the very end of the proof.
  
  By \Cref{pr:princ.bdl.ext} below the locally-trivial rank-$n$ vector bundle$_{\bU}$ $\cE|_Z$ extends to one such, $\cG$ say, over some closed $\bU$-neighborhood $W\supseteq Z$; switching focus to $W$ in order to avoid symbol proliferation, there is no loss in assuming $W=X$ to begin with.

  Now, $\cF\le \cE|_Z$ constitutes an isometric embedding $\cG|_Z\cong \cE|_Z$ so in particular a section over $Z$ of the bundle
  \begin{equation*}
    \cE\otimes \cG^*
    \cong
    \cH om(\cG,\cE)
  \end{equation*}
  (having employed the usual vector-bundle operations of tensoring, duality, hom: \cite[\S 5.6]{hus_fib}, \cite[(f) post Corollary 3.5]{ms-cc}, etc.). That section extends (globally) over $X$:
  \begin{itemize}[wide]
  \item non-$\bU$-equivariantly in first instance, by \cite[p.15]{dg_ban-bdl};
    
  \item whereupon we can average against the Haar probability measure of $\bU$ to obtain a $\bU$-equivariant morphism $\cG\to \cE$ agreeing with the original embedding over $Z$ (much as in \cite[Proposition 1.1]{seg}, for instance). 
  \end{itemize}
  Such an extension will still be an embedding
  \begin{equation*}
    \cG|_W
    \lhook\joinrel\xrightarrow{\quad \iota\quad}
    \cE|_W
    ,\quad
    \text{closed neighborhood }W\supseteq Z,
  \end{equation*}
  providing the desired \emph{Banach} bundle. In the more rigid setup of Hilbert bundles, an additional step will replace $\iota$ with the isometric component $\iota_{\mathrm{iso}}$ in the \emph{polar decomposition} \cite[\S I.5.2.2]{blk} $\iota=\iota_{\mathrm{iso}}|\iota|$.
\end{th:prlng.homog.bdls}

\begin{remarks}\label{res:gen.triv}
  \begin{enumerate}[(1),wide]
  \item For $\bU=\{1\}$ and \emph{trivial} (rather than locally trivial) $\cF$ \Cref{th:prlng.homog.bdls} reflects the fact (e.g. \cite[Corollary 2.10]{laz_bb-sel} or \cite[p.15]{dg_ban-bdl}) that bounded sections of (even (H), rather than (F)) Banach bundles extend from closed subsets of paracompact base spaces:
    \begin{itemize}[wide]
    \item having selected $n$ sections witnessing triviality across all of $Z$;

    \item extend locally around $Z$ by the previously-cited \cite[Corollary 2.10]{laz_bb-sel};

    \item whereby the assumed continuity of (the norm of) $\cE$ will ensure that those extensions are still linearly independent locally around $Z$, thus generating the desired $n$-homogeneous subbundle $\wt{\cF}\le \cE|_W$. 
    \end{itemize}

  \item There is naturally no reason (even assuming $\bU=\{1\}$) to expect the extension claimed by \Cref{th:prlng.homog.bdls} to go through \emph{globally}: $X$ might support nothing but trivial vector bundles (e.g. it might be contractible), while $\cF\xrightarrowdbl{\hspace{0pt}} Z$ is non-trivial. A concrete example would be $X:=\bR^3$, $\cE$ its tangent bundle and $Z\subset X$ the unit 2-sphere with $\cF\xrightarrowdbl{\hspace{0pt}} Z$ \emph{its} tangent bundle.
  \end{enumerate}  
\end{remarks}

Given the sophisticated toolkit now available in the literature, \Cref{pr:princ.bdl.ext} below (used in the proof of \Cref{th:prlng.homog.bdls}) is a relatively simple remark. As it appears to be somewhat difficult to trace down in easily-referenced form; it is recorded here for completeness and convenience. 


To unwind the statement, we remind the reader of the rich theory of $\bU$-equivariant principal bundles developed and studied in various degrees of generality in sources such as \cite{MR3704236,MR650393,MR885537,MR711050,MR730709,MR3331607,MR1231040}. The main reference for the present portion of the discussion will be \cite[\S I.8]{td_transf-gp}. A brief recollection follows. 

\begin{definition}\label{def:eqvr.bdl}
  Let $\bG$ and $\bU$ be topological groups, with $\alpha:\bU\circlearrowright \bG$ a continuous left action of the latter on the former by topological-group automorphisms.

  \begin{enumerate}[(1),wide]
  \item A \emph{principal $\bG$-bundle$_{\alpha}$} $\cP\xrightarrowdbl{\pi} X$ (or \emph{$\bG$-bundle$_{\bU}$} when $\alpha$ is understood) is a locally trivial principal $\bG$-bundle over $X$ in the usual sense \cite[\S 14.1]{td_alg-top}, internal to the category of $\bU$-spaces in the sense that
    \begin{itemize}[wide]
    \item $B$ carries a continuous $\bU$-action;
    \item and $\pi$ is $\bU$-equivariant;
    \item along with the right $\bG$-action on $\cP$:
      \begin{equation*}
        \gamma\left(pg\right)=\left(\gamma p\right)\alpha_{\gamma}(g)
        ,\quad
        \forall\left(\gamma\in \bU,\ g\in \bG,\ p\in \cP\right).
      \end{equation*}
    \end{itemize}

  \item A \emph{local object$_{\alpha}$} is a principal $\bG$-bundle$_{\alpha}$ over a coset space $\bU/H$ for closed subgroups $H\le \bU$.

  \item A principal $\bG$-bundle$_{\alpha}$ is \emph{locally trivial$_{\alpha}$} (as opposed to just plain locally trivial as a $\bG$-bundle, which we always assume) if $B$ can be covered by open subsets $U\subseteq B$ with each $\pi^{-1}U\xrightarrow{\pi} U$ admitting a $\bG$-bundle$_{\alpha}$ morphism to a local object$_{\alpha}$.

    One is mostly interested in \emph{numerable} coverings $X=\bigcup_j U_j$: those for which there are locally finite \emph{partition of unity} \cite[\S 13.1]{td_alg-top} $\varphi_j$ with
    \begin{equation*}
      \mathrm{supp}\;\varphi_j
      :=
      \overline{\varphi_j^{-1}\bR_{>0}}
      \subseteq U_j
      ,\quad
      \forall j,
    \end{equation*}
    \emph{local finiteness} \cite[Definition 20.2]{wil_top} for a family of subsets meaning that every point has a neighborhood encountering only finitely many. 
    
    As we work virtually exclusively with paracompact spaces, the property is automatic for arbitrary open covers \cite[Theorem 13.1.3]{td_alg-top}.

  \item Given, in addition to $\alpha$ and a principal $\bG$-bundle$_{\alpha}$ $\cP\xrightarrowdbl{\hspace{0pt}}X$, a left $\bG$-space $F$ internal to the category of $\bU$-spaces, the usual \cite[Definition 5.3.1]{hjjm_bdle} \emph{associated-bundle} construction goes through $\bU$-equivariantly to yield a fiber bundle
    \begin{equation*}
      \cP[F]:=\cP\times_{\bG}F
      \xrightarrowdbl{\quad}
      X
    \end{equation*}
    with fiber $F$ with $\bU$ operating on everything in sight.
  \end{enumerate}
  For compact Lie $\bG$ and $\bU$, assumptions universally in place below, locally trivial principal $\bG$-bundles$_{\alpha}$ over paracompact spaces are automatically \cite[Theorem I.8.10]{td_transf-gp} locally trivial$_{\alpha}$. 
\end{definition}

\begin{proposition}\label{pr:princ.bdl.ext}
  Consider
  \begin{itemize}[wide]
  \item a compact Lie group $\bU$ acting via $\alpha:\bU\circlearrowright \bG$ on another;

  \item a closed $\bU$-equivariant embedding $Z\subseteq X$ into a paracompact $\bU$-space;

  \item and a principal $\bG$-bundle$_{\alpha}$ $\cP\xrightarrowdbl{\pi}Z$.
  \end{itemize}
  For some closed $\bU$-neighborhood $W\supseteq Z$ there is a principal $\bG$-bundle$_{\alpha}$ $\wt{\cP}\xrightarrowdbl{\hspace{0pt}}W$ with $\wt{\cP}|_Z\cong \cP$. 
\end{proposition}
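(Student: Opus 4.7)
The plan is to reduce the extension problem to one about classifying maps. For compact Lie $\bU$ and $\bG$ with action $\alpha:\bU\circlearrowright\bG$, the theory of equivariant principal bundles (as in \cite[\S I.8]{td_transf-gp}) supplies a $\bU$-space $B_\alpha\bG$ carrying a universal principal $\bG$-bundle$_\alpha$ $E_\alpha\bG\to B_\alpha\bG$, with isomorphism classes of principal $\bG$-bundles$_\alpha$ over a paracompact $\bU$-space $Y$ corresponding to $\bU$-equivariant homotopy classes of maps $Y\to B_\alpha\bG$. The given bundle $\cP\xrightarrowdbl{\pi}Z$ thus arises as $f^*(E_\alpha\bG)$ for some equivariant classifying map $f:Z\to B_\alpha\bG$.

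The crucial input — to be developed elsewhere in the paper, per the abstract — is that $B_\alpha\bG$ is a $\bU$-equivariant absolute neighborhood extensor for closed equivariant embeddings into paracompact $\bU$-spaces. Granting this, $f$ extends equivariantly to an open $\bU$-invariant neighborhood $V\supseteq Z$, producing $\tilde f:V\to B_\alpha\bG$ with $\tilde f|_Z=f$. Paracompact Hausdorff spaces being normal, one can then select a Urysohn function on $X$ separating $Z$ from $X\setminus V$; averaging it against the Haar measure of $\bU$ (the same device used in the proof of \Cref{th:prlng.homog.bdls}) yields a $\bU$-invariant Urysohn function whose sub-level sets furnish a closed $\bU$-invariant neighborhood $Z\subseteq W\subseteq V$. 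The pullback $\widetilde{\cP}:=(\tilde f|_W)^*(E_\alpha\bG)$ is then a principal $\bG$-bundle$_\alpha$ over $W$ restricting to $\cP$ over $Z$.

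The main obstacle is the ANE property itself. The Milnor-style total space $E_\alpha\bG$ is built as a colimit of joins $\bG^{*(n+1)}$, and \Cref{se:colim.top} emphasizes (via \Cref{th:disc.cone}) that for general topological groups the interplay between the colimit topology and the principal action can be badly behaved — continuity can fail even on the truncated piece $E_1\bG=\bG*\bG$. Compactness of $\bG$ and $\bU$ puts us in the favorable regime of \Cref{th:lb.cc}, where one expects $B_\alpha\bG$ to admit an equivariant cellular structure; the ANE property should then follow by the familiar skeletal-extension paradigm underlying \cite[Theorem I.5.10]{kar_k_1978} and \cite[Proposition 1.1]{seg}, gluing local equivariant extensions via an equivariant partition of unity on $V$.
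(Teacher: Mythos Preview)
Your strategy---classify $\cP$ by an equivariant map $f:Z\to B_\alpha\bG$, extend $f$ over a neighborhood, then shrink to a closed invariant $W$ via Urysohn plus Haar averaging---is exactly the paper's. The divergence is in how the extension step is justified. You treat ``$B_\alpha\bG$ is a $\bU$-ANE for paracompact spaces'' as a single black box to be supplied elsewhere; the paper does not establish that statement in that form. Instead it exhibits
\[
B_\alpha\bG=\bigcup_{F,n}B_{\alpha\mid F,n}\bG
\]
as a filtered union of \emph{finite} $\bU$-CW-complexes (joins of finitely many local objects, truncated at level $n$), observes that each such piece is a $\bU$-ANE for paracompact spaces via the equivariant Hanner local-to-global principle \cite[Corollary 5.7]{zbMATH02136472}, and then invokes \cite[Theorem 5.1]{zbMATH02136472}: a map into a filtered union of $\bU$-ANEs extends provided every point of the domain has a neighborhood mapped into a single piece---a hypothesis the classifying map visibly satisfies. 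This sidesteps the question of whether the full colimit $B_\alpha\bG$ is itself a $\bU$-ANE (which would require compatible cell structures across the filtration, not addressed).

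Your closing paragraph misplaces the role of \Cref{se:colim.top}. \Cref{th:lb.cc} and \Cref{th:disc.cone} concern continuity of the principal $\bG$-action on colimit-topologized joins, which is relevant only to ensuring $E_\alpha\bG\to B_\alpha\bG$ is a genuine bundle in the first place (handled in the paper's proof by a one-line appeal to compactness of $\bU$ and $\bG$, citing \cite[footnote 1]{atiyahsegal}); those results say nothing about the ANE property. The skeletal-extension paradigm you gesture at is roughly the right shape, but the precise tool is Antonyan's filtered-union extension theorem rather than the section-extension results of \cite{kar_k_1978,seg}.
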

\begin{proof}
  A brief review is in order of the \emph{universal} principal $\bG$-bundle$_{\alpha}$ of \cite[Theorem I.8.12]{td_transf-gp}, with slight alterations. It will be denoted here by $E_{\alpha}\bG\xrightarrowdbl{\hspace{0pt}}B_{\alpha}\bG$ by analogy to Milnor's (non-equivariant) model \cite[\S 14.4.3]{td_alg-top} $E\bG\xrightarrowdbl{\hspace{0pt}} B\bG$. 

  Defining
  \begin{equation*}
    E_{\alpha 0}\bG
    :=
    \text{disjoint union}
    \bigsqcup_{r\in R}
    \cP_r
  \end{equation*}
  with $\cP_r$ denoting (the total spaces of) representatives $\cP_r\xrightarrowdbl{} X_r$ for all isomorphism classes of local objects$_{\alpha}$, set
  \begin{equation}\label{eq:eens}
    E_{\alpha n}\bG
    :=
    \left(E_{\alpha 0}\bG\right)^{*(n+1)}
    ,\quad
    E_{\alpha}\bG
    :=
    \bigcup_n
    E_{\alpha n}\bG
    ,\quad
    B_{\alpha \bullet}\bG
    :=
    E_{\alpha \bullet}\bG/\bG
    \quad
    \left(\bullet\in \{n,\ \text{blank}\}\right)
  \end{equation}
  (so that $B_{\alpha} \bG$ is the \emph{classifying space} for numerable principal $\bG$-bundles$_{\alpha}$). While the matter is not absolutely crucial here, it seems appropriate to topologize the various spaces somewhat differently as compared to \cite[\S I.8]{td_transf-gp} (and \cite[\S 14.4.3]{td_alg-top}):
  \begin{itemize}[wide]
  \item the truncated joins $E_{\alpha n}\bG$ are quotient- rather than coordinate-topologized;
    
  \item while $E_{\alpha}\bG$ will be their topological \emph{colimit} in the usual \cite[Definition 2.6.6]{brcx_hndbk-1} category-theoretic sense. 
  \end{itemize}
  The compactness of $\bG$ and $\bU$ will ensure (as observed in \cite[footnote 1]{atiyahsegal} in the parallel, non-equivariant setting) that the $\bG$-action on $E_{\alpha\bG}$ is still continuous (in this regard, see also \Cref{re:not.top.compl}), and the universality proof goes through as in \cite[Theorem I.8.12]{td_transf-gp}. 

  Note next that we also have a topological-colimit realization
  \begin{equation*}
    \begin{aligned}
      \left(E_{\alpha}\bG\xrightarrowdbl{}B_{\alpha}\bG\right)
      &\cong
        \varinjlim_{\substack{\text{finite $F\subseteq R$}\\n\in \bZ_{\ge 0}}}
      \left(E_{\alpha\mid F,n}\bG\xrightarrowdbl{}B_{\alpha\mid F,n}\bG\right)\\
      E_{\alpha\mid F,n}
      :=
      \left(
      \bigsqcup_{r\in F}
      \cP_r
      \right)^{*(n+1)}
      &,\quad
        B_{\alpha\mid F,n}:=E_{\alpha\mid F,n}/\bG,
    \end{aligned}    
  \end{equation*}
  with $\left(E_{\alpha\mid F,n}\right)_{F,n}$ and $\left(B_{\alpha\mid F,n}\right)_{F,n}$ both non-decreasing in $(F,n)$ for the ordering
  \begin{equation*}
    (F,n)\preceq (F',n')
    \iff
    \left(
      F\subseteq F'
      \wedge
      n\le n'
    \right).
  \end{equation*}
  $\bU$ and $\bG$ both being compact Lie, the individual $E_{F,n}$ and $B_{F,n}$ are not difficult to give finite \emph{$\bU$-CW-complex} structures (\cite[\S II.1]{td_transf-gp} calls these \emph{$\bU$-equivariant CW-complexes}).

  Recall (e.g. \cite[p.292]{zbMATH02136472}) that a \emph{$\bU$-absolute neighborhood extensor} for a class $\cX$ of $\bU$-spaces (or a \emph{$\bU$-\cat{ANE}($\cX$)}) is a $\bU$-space $Y$ with the property that $\bU$-equivariant maps
  \begin{equation*}
    Z\xrightarrow{\quad}Y
    ,\quad
    Z=\overline{Z}\subseteq X\in \cX
  \end{equation*}
  extend across neighborhoods of $Z$. Finite CW-complexes are well-known to be (non-equivariant) \cat{ANE}s for paracompact spaces \cite[Theorem II.17.2]{hu_rtrct} as an application of Hanner's \cite[Theorem II.17.1]{hu_rtrct} (=\cite[Theorem 19.2]{zbMATH03076371}), to the effect that spaces that are \emph{local} \cat{ANE}s for paracompact spaces are globally so. The latter local-to-global result persists \cite[Corollary 5.7]{zbMATH02136472} for paracompact $\bU$-spaces with compact $\bU$, and in like fashion one deduces that finite $\bU$-CW-complexes are $\bU$-\cat{ANE}s for paracompact spaces.
  
  We thus have a \emph{filtered} \cite[\S IX.1]{mcl_2e} union
  \begin{equation*}
    B_{\alpha}\bG
    =
    \bigcup_{F,n}
    B_{\alpha\mid F,n}\bG
  \end{equation*}
  of $\bU$-\cat{ANE}s for paracompact $\bU$-spaces. To conclude that bundle$_{\alpha}$-classifying maps extend from closed $\bU$-invariant $Z\subseteq X$ to neighborhoods $W\supseteq Z$ thereof it now suffices to apply \cite[Theorem 5.1]{zbMATH02136472}, noting that a classifying map satisfies that result's hypothesis: every point $a\in Z$ has a neighborhood mapped into an individual $B_{\alpha\mid F,n}\bG$.
\end{proof}

We next turn to the algebra-bundle variant of \Cref{th:prlng.homog.bdls}. 

\pf{th:prlng.homog.abdls}
\begin{th:prlng.homog.abdls}

  
  There is also no harm in assuming the fibers of $\cB$ all isomorphic to a fixed (finite-dimensional, semisimple Banach or $C^*$-)algebra $B$, for their isomorphism classes will in any partition $Z$ into clopen sets.
  
  \begin{enumerate}[label={},wide]
  \item\textbf{\Cref{item:th:prlng.homog.abdls:ban}} In first instance, \Cref{pr:princ.bdl.ext} applied the automorphism group $\bG$ of some finite-dimensional $C^*$-algebra, i.e. a semidirect product of the form
    \begin{equation*}
      \left(\prod_{i=1}^k PU(n_i)\right)\rtimes \left(\text{finite group}\right),
    \end{equation*}
    will extend $\cB$ over a closed neighborhood (so we may as well assume globally) to a locally-trivial, $B$-fibered bundle$_{\bU}$ $\cM\xrightarrowdbl{\hspace{0pt}}X$ with $\cM|_A\cong \cB$. Precisely as in the proof of \Cref{th:prlng.homog.bdls}, we have a $\bU$-equivariant embedding
    \begin{equation*}
      \cM|_{W}
      \lhook\joinrel\xrightarrow{\quad\iota\quad}
      \cA|_W
      ,\quad
      \text{some closed neighborhood }W\supseteq Z
    \end{equation*}
    of \emph{vector} bundles, (unital and) multiplicative over $Z$. As a matter of convenience, a slight perturbation
    \begin{equation*}
      \iota
      \xmapsto{\quad}
      \bigg(
      \cM_x\ni a
      \xmapsto{\quad}
      \iota(a)+1_{\cA_x}-\iota\left(1_{\cM_x}\right)
      \in \cA_x
      \bigg)
      ,\quad
      x\in W
    \end{equation*}
    over sufficiently small $W$ followed by averaging over $\bU$ will further ensure $\iota$ is unital throughout $W$ (observe that this process will not have altered $\iota$ over the original smaller subspace $Z\subseteq X$). 
    
    $\iota$ pulls back a Banach-bundle norm on $\cM|_W$, though perhaps (away from $Z\subseteq W$) not quite what one usually calls an \emph{algebra} norm \cite[Definition 2.1.1]{dales_autocont}: it need not be sub-multiplicative in the sense that $\|ab\|\le \|a\|\cdot \|b\|$, nor is $\|1\|$ necessarily 1. Shrinking $W$ if necessary though, we can assume multiplication is uniformly bounded as a bilinear map \cite[post Theorem A.3.35]{dales_autocont}, and uniformly bounded on units:
    \begin{equation}\label{eq:unif.bds.almost.norm}
      \exists\left(K_2,\ K_0>1\right)
      \forall\left(x\in W,\ a,b\in \cM_x\right)
      \bigg(
      \|ab\|\le K_2\|a\|\cdot \|b\|
      \ \wedge\ 
      K_0^{-1}\le\|1_x\|\le K_0
      \bigg).
    \end{equation}
    All norms involved can also always be assumed $\bU$-invariant, by Haar-averaging. To summarize the current framework:
    \begin{itemize}[wide]
    \item we have the equivariant, isometric, unital embedding $\iota$ satisfying \Cref{eq:unif.bds.almost.norm} over $W\supseteq Z$;

    \item multiplicative over $Z$;

    \item and wish to substitute for it a (still equivariant, unital, and) multiplicative $\wt{\iota}$ over a possibly-smaller neighborhood of $Z$, with $\wt{\iota}|_Z=\iota$. 
    \end{itemize}
    This is where the assumed semisimplicity of the fibers $\cM_x$ will play a role: \cite[Corollary 3.2 via Theorem 3.1]{john_approx} proves that almost-multiplicative linear maps from semisimple finite-dimensional Banach algebras into arbitrary Banach algebras are closely approximable by multiplicative maps, and we need a version of this functioning
    \begin{itemize}[wide]
    \item in families (i.e. for algebra bundles rather than individual algebras);

    \item and $\bU$-equivariantly. 
    \end{itemize}    
    A brief reminder of how the proof of \cite[Theorem 3.1]{john_approx} functions will help adapt that proof to the present equivariant-bundle setup. The argument proceeds by a Newton-type approximation procedure that, in current notation, would be
    \begin{equation*}
      \iota_0:=\iota
      ,\quad
      \forall\left(n\ge 0\right)
      \quad:\quad
      \iota_{n+1}
      :=
      \tau\iota_n      
    \end{equation*}
    where
    \begin{equation}\label{eq:vee.tau}
      \forall\left(
        \cM|_W
        \xrightarrow[\quad]{\quad\varphi\quad}
        \cA|_W
      \right)
      \forall
      \left(
        s,t\in \Gamma_b\left(\cM|_W\right)
      \right)
      \quad:\quad
      \left[
        \begin{aligned}
          \tau\varphi
          &:=
            \varphi+\varphi(e_1)\varphi^{\vee}(e_2,-)\\
          \varphi^{\vee}(s,t)
          &:=
            \varphi(st)-\varphi(s)\varphi(t).
        \end{aligned}
      \right.
    \end{equation}
    and
    \begin{equation}\label{eq:si.e}
      e
      :=
      e_1\otimes e_2
      \in \Gamma_{b}\left(\cM|_W^{\otimes 2}\right)
    \end{equation}
    in \emph{Sweedler notation} \cite[Notation 1.4.2]{mont} (so there is a suppressed notation; the section does not restrict, necessarily, to simple tensors in individual fibers) restricts, at each individual $x\in W$, to a \emph{separability idempotent} \cite[Theorem 3]{cmz_frob-sep_2002}
    \begin{equation*}
      e_x=e_{1x}\otimes e_{2x}
      \in \cM_x^{\otimes 2}
      ,\quad
      \begin{aligned}
        m e_x
        &= e_xm,\ \forall m\in \cM_x\\
        e_{1x}e_{2x}
        &:=
          \text{multiplication}(e_x)=1
      \end{aligned}
    \end{equation*}
    of the finite-dimensional semisimple algebra $\cM_x$ (what \cite[Definition 1.9.19]{dales_autocont} calls a \emph{diagonal}). The aforementioned proof will function both family-wise and equivariantly, substituting for $\iota$ the limit
    \begin{equation*}
      \wt{\iota}:=\varinjlim_n \iota_n
      \quad
      \left(\text{unital, multiplicative and $\bU$-equivariant}\right)
    \end{equation*}
    provided
    \begin{enumerate}[(a),wide]
    \item\label{item:sml.w} we restrict attention to a shrunken $W$ over which $\iota^{\vee}$ is uniformly small (always achievable, given that $\iota^{\vee}\equiv 0$ over $Z$ itself);

    \item\label{item:si.sect} and separability idempotents for individual $\cM_x$, which do indeed always exist \cite[\S 10.7, Corollary b]{pierce_assoc} for semisimple finite-dimensional algebras over perfect fields, can be chosen coherently and equivariantly over $W$ (i.e. that there is indeed a $\bU$-invariant separability-idempotent \emph{section} $e$ as discussed). 
    \end{enumerate}
    The argument thus amounts to verifying \Cref{item:si.sect}, relegated to \Cref{pr:si.sect}. 
    
  \item\textbf{\Cref{item:th:prlng.homog.abdls:cast}} The additional ingredient, as compared to part \Cref{item:th:prlng.homog.abdls:ban}, is the $*$ structure. The earlier argument will carry through in the modified form employed in \cite[Theorem 7.2]{john_approx}, also tailored to $*$ structures.
    
    Define $*$ operators on one- and two-variable maps of \emph{$*$-vector spaces} \cite[Definition 3.1]{zbMATH05592601} (thence extending the definition in the obvious fashion for maps of $*$-structured bundles) by
    \begin{equation*}
      f^*(a)
      :=
      f\left(a^*\right)^*
      \quad\text{and}\quad
      f^*(a,b)
      :=
      f\left(b^*,a^*\right)^*
    \end{equation*}
    respectively. Observe next that the $*$ and $\vee$ operators commute: for $\iota$ as in the proof of part \Cref{item:th:prlng.homog.abdls:ban} we have
    \begin{equation*}
      \left(\iota^*\right)^{\vee}
      =
      \left(\iota^{\vee}\right)^{*}.
    \end{equation*}
    The iterative construction is now $\iota_n\rightsquigarrow \iota_{n+1}:=\tau_{sa} \iota_n$ (subscript suggesting \emph{self-adjoint}) with
    \begin{equation*}
      \tau_{sa}
      :=
      \frac{\tau+\tau^*}2
      \quad
      \left(\text{$\tau$ as in \Cref{eq:vee.tau}}\right).
    \end{equation*}
    The limit will be a $*$-morphism provided $\iota$ was one (which we are free to assume), and the separability-idempotent section \Cref{eq:si.e} satisfies
    \begin{equation*}
      \bigg(
      e^*=\left(e_1\otimes e_2\right)^*=e_1^*\otimes e_2^*
      \bigg)
      =
      \bigg(
      e_2\otimes e_1:=\text{flip map}(e)
      \bigg).
    \end{equation*}
    The same auxiliary \Cref{pr:si.sect} ensures this as well, concluding the proof.  \qedhere
  \end{enumerate}
\end{th:prlng.homog.abdls}

\begin{proposition}\label{pr:si.sect}
  Let $\bU$ be a compact Lie group and $\cA\xrightarrowdbl{\pi}X$ a real or complex locally-trivial algebra bundle$_{\bU}$ with finite-dimensional semisimple fibers over a paracompact base space.

  \begin{enumerate}[(1),wide]
  \item\label{item:pr:si.sect:si.plain} There is a $\bU$-invariant section of $e\in \Gamma\left(\cA^{\otimes 2}\right)$ restricting to a separability idempotent in each fiber.
    
  \item\label{item:pr:si.sect:si.ast} If $\cA$ is furthermore a bundle of $*$-algebras then $e$ as above can be chosen so that $e^*=\sigma(e)$ for the tensor-flip map $\sigma$.
  \end{enumerate}
\end{proposition}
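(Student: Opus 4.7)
The key observation underlying the proof is that, in any unital algebra $B$, the set of separability idempotents is either empty or an affine subspace of $B\otimes B$: it is cut out by the affine equation $m(e)=1$ (with $m:B\otimes B\to B$ the multiplication) together with the family of homogeneous linear equations $(a\otimes 1)e=e(1\otimes a)$, $a\in B$. Under the present hypotheses, the cited \cite[\S 10.7, Corollary b]{pierce_assoc} ensures this affine subspace is non-empty in every fiber. The plan is to exploit this affineness to glue local sections into a global one and then to symmetrize for equivariance and (in part \Cref{item:pr:si.sect:si.ast}) for $*$-compatibility.

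For part \Cref{item:pr:si.sect:si.plain}, I would first invoke the local triviality of $\cA$ to pick an open cover $\{U_i\}$ of $X$ on which $\cA|_{U_i}$ is algebra-bundle-isomorphic to a trivial bundle $U_i\times B_i$, and, for each $i$, transport a chosen separability idempotent of $B_i$ to a constant section $e_i\in\Gamma(\cA^{\otimes 2}|_{U_i})$. Paracompactness then supplies a locally finite partition of unity $\{\varphi_i\}$ subordinate to the cover, and the global section $e_0:=\sum_i\varphi_i e_i$ is, in every fiber $\cA_x$, a convex combination of separability idempotents, hence by affineness itself a separability idempotent. Finally, Haar-averaging $e(x):=\int_\bU\gamma\cdot e_0(\gamma^{-1}x)\,d\gamma$ produces a $\bU$-invariant section: left-invariance of Haar measure supplies equivariance, while each integrand $\gamma\cdot e_0(\gamma^{-1}x)$ is again a separability idempotent in $\cA_x$ (because $\bU$ acts by algebra automorphisms), so the integral, as a limit of convex combinations within the affine solution set, remains a separability idempotent.

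For part \Cref{item:pr:si.sect:si.ast}, a short direct calculation shows that if $e=\sum a_i\otimes b_i$ is a separability idempotent in a $*$-algebra $B$ then so is $\sigma(e^*)=\sum b_i^*\otimes a_i^*$: condition (i) becomes $\sum b_i^*a_i^*=(\sum a_ib_i)^*=1$, and condition (ii) for $\sigma(e^*)$ is translated by applying $\sigma\circ(-)^*$ into the original condition (ii) for $e$, with the free variable $m$ replaced by $m^*$. Starting from the $\bU$-invariant section $e$ produced in part \Cref{item:pr:si.sect:si.plain}, I would then set $e':=\tfrac12(e+\sigma(e^*))$. Affineness preserves the separability-idempotent property fiberwise; $\bU$-invariance persists because the $\bU$-action commutes with both $*$ and $\sigma$; and the elementary identity $(\sigma f)^*=\sigma(f^*)$ gives $e'^*=\tfrac12(e^*+\sigma(e))=\sigma(e')$ directly.

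The whole argument is fairly routine once the affineness of the space of separability idempotents is in hand; the only step that demands genuine (if elementary) verification is the symmetry claim that $\sigma(e^*)$ is again a separability idempotent, since this is what licenses the symmetrization in part \Cref{item:pr:si.sect:si.ast}.
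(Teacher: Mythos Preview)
Your argument is correct and takes a genuinely different route from the paper's. The paper does not use the affine structure of the set of separability idempotents at all; instead, it observes that every finite-dimensional semisimple algebra $A$ carries a \emph{canonical} separability idempotent $e_A$, obtained by transporting $\id_A\in\End(A)\cong A\otimes A^*$ across the isomorphism $A\cong A^*$ induced by the nondegenerate trace form $\mathrm{Tr}(-\cdot-)$. Because this construction is natural (invariant under all algebra automorphisms of $A$), the fiberwise assignment $x\mapsto e_{\cA_x}$ is automatically continuous and $\bU$-invariant, with no need for partitions of unity or Haar averaging. For part \Cref{item:pr:si.sect:si.ast} both arguments coincide: the paper also passes to $\tfrac12(e+\sigma(e)^*)$, citing the same verification you carry out.

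Your approach is softer and more robust---it would work verbatim for any affine-over-each-fiber condition, not just separability idempotents---while the paper's approach produces a distinguished canonical section (explicitly $\sum_{i,j}e_{ij}\otimes e_{ji}$ on matrix fibers) and sidesteps the averaging step entirely.
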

\begin{proof}
  As noted in the course of the proof of \cite[Theorem 7.2]{john_approx}, $\frac{e+\sigma(e)^*}{2}$ is a separability idempotent whenever $e$ is. The procedure plainly also preserves $\bU$-invariance, so \Cref{item:pr:si.sect:si.ast} follows from \Cref{item:pr:si.sect:si.plain} (on which we focus for the duration of the proof). 

  The claim will follow once we verify that a finite-dimensional semisimple real or complex algebra $A$ is equipped with a canonical separability idempotent $e_A\in A^{\otimes 2}$, invariant under all $A$-automorphisms. It will be clear, once the construction has been described, that the individual $e_{\cA_x}$ vary continuously with $x\in X$ and thus constitute a(n automatically $\bU$-invariant) section.

  The left multiplication self-action $A\times A\to A$ induces a trace
  \begin{equation*}
    A
    \xrightarrow{\quad \mathrm{Tr}\quad}
    \Bbbk\in \left\{\bR,\bC\right\},
  \end{equation*}
  with the bilinear form $\mathrm{Tr}(-\cdot -)$ on $A$ non-degenerate by semisimplicity (thus making $A$ into a \emph{Frobenius algebra} \cite[Definition 3 and Theorem 4]{cmz_frob-sep_2002}). This gives an identification
  \begin{equation*}
    A
    \ni
    a
    \xmapsto[\quad\cong\quad]{\quad \varphi\quad}
    \mathrm{Tr}(-\cdot a)
    \in
    A^*
    \quad
    \left(\text{as left $A$-modules}\right),
  \end{equation*}
  hence also elements
  \begin{equation*}
    \begin{tikzpicture}[>=stealth,auto,baseline=(current  bounding  box.center)]
      \path[anchor=base] 
      (0,0) node (end) {$\End(A)$}
      +(3,0) node (aa*) {$A\otimes A^*$}
      +(6,0) node (aa) {$A\otimes A$}
      +(0,-1) node (1) {$\id_A$}
      +(3,-1) node (ff*) {$\sum_i f_i\otimes f_i^*$}
      +(6,-1) node (e) {$e$}
      +(0,-.5) node () {\rotatebox{90}{$\in$}}
      +(3,-.5) node () {\rotatebox{90}{$\in$}}
      +(6,-.5) node () {\rotatebox{90}{$\in$}}
      ;
      \draw[->] (end) to[bend left=6] node[pos=.5,auto] {$\scriptstyle $} (aa*);
      \draw[->] (aa*) to[bend left=6] node[pos=.5,auto] {$\scriptstyle \id\otimes \varphi^{-1}$} (aa);
      \draw[|->] (1) to[bend right=6] node[pos=.5,auto] {$\scriptstyle $} (ff*);
      \draw[|->] (ff*) to[bend right=6] node[pos=.5,auto] {$\scriptstyle $} (e);
    \end{tikzpicture}
  \end{equation*}
  where $(f_i)\subset A$ is any basis with $\left(f_i^*\right)\subset A^*$ the corresponding dual basis. The construction has thus far been automorphism-invariant as desired, and direct examination on a single complex matrix algebra (to which case we can always reduce by complexifying
  \begin{equation*}
    A
    \cong
    \prod_{i=1}^r M_{n_i}(\bR)
    \times
    \prod_{j=1}^c M_{n_j}(\bC)
    \times
    \prod_{k=1}^q M_{n_q}(\bH:=\text{quaternions})
    \quad
    \left(\text{\cite[Theorems 3.5 and 13.12]{lam_1st_2e_2001}}\right)
  \end{equation*}
  and passing to an individual factor) shows that $e$ is a separability idempotent. Indeed, running through the argument for $M_n(\bC)$ will produce $e=\sum_{i,j}e_{ij}\otimes e_{ji}$ with $e_{ij}$, $1\le i,j\le n$ denoting the usual matrix units. 
\end{proof}


\section{On and around colimit topologies on universal bundles}\label{se:colim.top}

Specializing \Cref{pr:princ.bdl.ext} back to trivial $\bU$, the total space $E\bG=E_{\alpha}\bG$ featuring in that proof for a (here, compact) topological group $\bG$ is variously equipped with two topologies in the literature:
\begin{enumerate}[(a),wide]
\item\label{item:stop} the colimit topology $\tau_{\varinjlim}$ induced by
  \begin{equation}\label{eq:egeng}
    E\bG
    =
    \bigcup_n E_n\bG
    ,\quad
    E_n\bG
    :=
    \bG^{*(n+1)},
  \end{equation}
  as in \cite[Definition 2.2.7]{hjjm_bdle}, \cite[\S 5]{miln_univ-2} or \cite[\S 2]{atiyahsegal} (also the CW topology featuring in the proof of \Cref{pr:princ.bdl.ext}), with $\bG^{*(n+1)}$ themselves quotient-topologized via
  \begin{equation*}
    \bG^{n+1}\times \left(\text{$n$-simplex }\Delta^n\right)
    \xrightarrowdbl{\quad}
    \bG^{*(n+1)}
    \quad
    \left(
      \text{e.g. \cite[Definition 7.2.3]{hjjm_bdle}}
    \right)
    ;
  \end{equation*}
\item\label{item:wtop} or the weakest topology $\tau_{w}$ rendering the maps
  \begin{equation*}
    \begin{aligned}
      E\bG
      \ni
      \sum_{n}t_n g_n
      &\xmapsto{\quad c_m\quad}
        t_m
        \in [0,1]\\
      c_m^{-1}((0,1])
      \ni
      \sum_{n}t_n g_n
      &\xmapsto{\quad \gamma_m\quad}
        g_m\in \bG
    \end{aligned}      
  \end{equation*}
  continuous (as in \cite[\S 3]{miln_univ-2}, \cite[\S 14.4.3]{td_alg-top} or \cite[\S 4.11]{hus_fib}). 
\end{enumerate}
\cite[footnote 1, p.3]{atiyahsegal} comments briefly on why they function equally well for compact $\bG$ for the purposes of bundle classification.

The weaker topology is plainly metrizable in the cases of interest here ($\bG$ compact Lie), as it identifies $E\bG$ with a subspace
\begin{equation*}
  E\bG
  =
  \left\{(t_ig_i)\in \cC\bG\ :\ \sum_i t_i=1,\ t_j=0\text{ for }j\gg 0\right\}
  \left(\subset \cC\bG\right)^{\bZ_{>0}}
\end{equation*}
where $\cC\bG:=\bG\times [0,1]/\bG\times \{0\}$ is the \emph{cone} \cite[p.9]{hatch_at} on $\bG$.

For the purpose of extending maps into $E\bG$ (or $B\bG$) defined on closed subspaces of paracompact spaces, it is relevant \cite[Theorem 3.1(b)]{MR59541} whether or not the metric topology on $E\bG$ is \emph{complete} (i.e. \cite[Problem 6.K]{kel-top} can be induced by a complete metric). Equivalently, $E\bG$ plainly being separable, the question is whether it is \emph{Polish} (separable + completely metrizable: \cite[Definition 3.1]{kech_descr}). That for compact Lie $\bG$ the metric topology on the Milnor total space $E\bG$ is \emph{not} Polish is not difficult to see: complete metrizability would entail \cite[Corollary 25.4(b)]{wil_top} $E\bG$'s being a \emph{Baire space} \cite[Definition 25.1]{wil_top}, which it cannot be having been exhibited as a countable union \Cref{eq:egeng} of nowhere-dense closed subsets.

\begin{remark}\label{re:not.top.compl}
  One issue with the finer topology \Cref{item:stop} above, hinted at in \cite[footnote 1]{atiyahsegal}, is the continuity of the action $E\bG\times \bG\to E\bG$: the endofunctor
  \begin{equation*}
    \left(
      \cat{Top}
      :=
      \text{Hausdorff topological spaces}
    \right)
    \xrightarrow{\quad -\times \bG\quad}
    \cat{Top}
  \end{equation*}
  will not, typically, preserve colimits. It does precisely \cite[Proposition 2.16]{MR4811433} when $\bG$ is \emph{exponentiable} in the sense of \cite[Definition 7.1.3]{brcx_hndbk-2}. As we are restricting attention to Hausdorff topological groups, being exponentiable is equivalent \cite[Theorems II-4.12 and V-5.6]{ghklms_cont-latt_2003} to being locally compact (though locally compact spaces are exponentiable even absent any separation assumptions: \cite[Proposition 7.1.5]{brcx_hndbk-2}). 
  
  An alternative to imposing local compactness or other such constraints is to work in the category $\cat{Top}_{\kappa}$ of (Hausdorff) \emph{compactly-generated} or \emph{$\kappa$-spaces} (\cite[Definition 7.2.5]{brcx_hndbk-2}, initially introduced in \cite[\S 1]{MR210075}). Said category is \emph{Cartesian closed} by \cite[Corollary 7.2.6]{brcx_hndbk-2}, i.e. all endofunctors of the form
  \begin{equation*}
    -\times_{\kappa} \bG
    ,\quad
    \otimes_{\kappa}:=\text{product in }\cat{Top}_{\kappa}
  \end{equation*}
  are left adjoints.

  This is the approach adopted (sometimes tacitly) in sources where colimit topologies such as \Cref{item:stop} and close analogues on classifying spaces are favored: \cite[\S 1, p.105]{zbMATH03317644}, \cite[\S 0]{MR353298}, \cite[\S 16.5]{may_at_1999}, \cite[p.1]{may_cls}, \cite[\S 1.1]{MR3704236}, and so forth.
  
  One exception to this last remark appears to be \cite[Definition 7.2.7]{hjjm_bdle}: the authors do not seem to me to be working in the category of $\kappa$-spaces, nor are there any constraints explicitly imposed on the topology of the group $\bG$. This is not an idle concern: for ill-behaved $\bG$ the translation action on the colimit-topologized $E\bG$ can be discontinuous, per \Cref{pr:q.discont.act.join}. 
\end{remark}

\begin{proposition}\label{pr:q.discont.act.join}
  The free diagonal right action of $\left(\bQ,+\right)$ on $\bQ*\bQ$ is discontinuous if the join is equipped with the quotient topology. 
\end{proposition}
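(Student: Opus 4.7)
My plan is to exhibit an explicit open set $U \subseteq \bQ*\bQ$ around the apex $p_0 := [*, 0, 1]$ (using $[x,y,t]$-notation for points of the join) such that $m^{-1}(U)$ contains no product-open neighborhood of $(p_0, 0)$, where $m : (\bQ*\bQ) \times \bQ \to \bQ*\bQ$ denotes the action. I will build $U$ via its preimage $\tilde U = \{(x, y, t) : |y| < 1,\ t > 1 - f(x)\}$ in $\bQ \times \bQ \times [0, 1]$ for a strictly positive lower semi-continuous function $f : \bQ \to (0, 1]$ that can be arranged to take arbitrarily small values along a sequence of rationals converging to an \emph{irrational} point. The irrationality is essential: at any rational accumulation point $q_0$, lower semi-continuity would force $\liminf_{q \to q_0} f(q) \ge f(q_0) > 0$, ruling out such pathological thinning. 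This is exactly where the failure of $\bQ$ to be complete (equivalently, to be locally compact) gets exploited.

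Concretely, I will fix $\alpha \in \bR \setminus \bQ$ and pick distinct rationals $z_n \to \alpha$, then set $f(z_n) := 1/n$ and $f \equiv 1$ off $\{z_n : n \ge 1\}$. Lower semi-continuity of $f$ reduces to checking that every tail $\{z_n\}_{n \ge N}$ is closed in $\bQ$, and this holds because its only $\bR$-accumulation point is $\alpha \notin \bQ$. The resulting $\tilde U$ is open (by l.s.c.~of $f$) and saturated for the two join identifications: no points with $t = 0$ sit in $\tilde U$ since $f \le 1$, and the slice at $t = 1$ equals $\bQ \times (-1, 1) \times \{1\}$ independently of $x$ by positivity of $f$. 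Hence $\tilde U$ descends to an open $U \subseteq \bQ*\bQ$ with $p_0 \in U$.

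For the discontinuity at $(p_0, 0)$, I will take an arbitrary product-open neighborhood $V \times W$ of $(p_0, 0)$ with $W = (-r, r) \cap \bQ$ and produce a witness. When $r < 1$ (the case $r \ge 1$ is immediate from the $|y| < 1$ slab), pick $x_0 \in \bQ$ with $|x_0 - \alpha| < r/2$ by density of $\bQ$ in $\bR$, and extract $\gamma > 0$ with $(x_0, 0, t) \in \tilde V$ for $t \in (1 - \gamma, 1]$ from openness of $\tilde V$ around $(x_0, 0, 1)$. Taking $n$ large enough that simultaneously $1/n < \gamma$ and $|z_n - \alpha| < r/2$, the element $q_n := z_n - x_0$ lies in $W$ and the interval $(1 - \gamma, 1 - 1/n]$ is non-empty. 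Picking $t$ therein, $p := [x_0, 0, t]$ belongs to $V$, whereas $q_n \cdot p = [z_n, q_n, t]$ lies outside $U$ since $t \le 1 - f(z_n)$ puts $(z_n, q_n, t)$ out of $\tilde U$. This contradicts $V \times W \subseteq m^{-1}(U)$.

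The only non-routine ingredient is the first one --- producing a strictly positive l.s.c.~$f : \bQ \to (0, 1]$ with arbitrarily small values along an accumulating sequence --- which is what compels the use of an irrational limit $\alpha$; everything else is routine unpacking of the quotient topology on the join.
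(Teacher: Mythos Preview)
Your argument is correct. The open set $U$ is indeed saturated and open (your l.s.c.\ check is fine, since each tail $\{z_n\}_{n\ge N}$ has $\alpha\notin\bQ$ as its only real accumulation point), and the witness $(p,q_n)$ does the job; just note that you may as well assume $\gamma\le 1$ (or intersect with $(0,1)$) so that the chosen $t$ lands in $(0,1)$ and the equivalence class $[z_n,q_n,t]$ is a singleton. The parenthetical about $r\ge 1$ is a red herring: you only need to contradict containment for \emph{some} $W$, so shrinking to $r<1$ is automatic.

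The paper takes a different, more structural route: it passes to the half-join $\{t\ge 1/2\}$, identifies it homeomorphically with $\bQ\,\wt\times\,\cC\bQ$ (product set, but quotient-topologized from $\bQ\times\bQ\times[0,1]$), straightens the diagonal action into left-translation on the $\bQ$-factor via $(q,tq')\mapsto(q,t(q'-q))$, and then reduces to the discontinuity of
\[
\bQ\times\left(\bQ\,\wt\times\,\cC\bZ_{\ge 0}\right)\xrightarrow{\ (+)\times\id\ }\bQ\,\wt\times\,\cC\bZ_{\ge 0},
\]
which is a documented instance in the literature on quotient topologies misbehaving under products. Your construction is essentially the cone-based argument the paper records afterward as an alternative (building a neighborhood of the cone tip as the region under the graph of a continuous function vanishing only at irrationals), transplanted directly into the join. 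The paper's reduction buys a clean conceptual picture (the pathology is entirely a $\bQ\times\cC\bQ$ phenomenon) at the cost of an external citation; your version is self-contained and gives an explicit witness, which is arguably more informative.
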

\begin{proof}
  We progressively simplify the space acted upon, bringing it into the scope of familiar examples of quotient-topology misbehavior under taking products.

  Note in first instance that it will be enough to prove the discontinuity of the action on the ``truncated'' join
  \begin{equation}\label{eq:trunc.join}
    \begin{aligned}
      \bQ\times \bQ\times \left[\frac 12,1\right]
      \bigg/
      (q,q',1)\sim (q,q'',1)
      &\cong
        \left\{tq+(1-t)q',\ t\in \left[\frac 12,1\right],\ q,q'\in \bQ\right\}\\
      &\subset
        \bQ*\bQ
        =
        \left\{tq+(1-t)q',\ t\in \left[0,1\right],\ q,q'\in \bQ\right\}
    \end{aligned}    
  \end{equation}
  equipped with its quotient topology. The map
  \begin{equation*}
    \text{\Cref{eq:trunc.join}}
    \ni
    tq+(1-t)q'
    \xmapsto{\quad}
    \left(q,\frac{1-t}{t}\right)
    \in
    \bQ\times \cC \bQ
    =
    \bQ\times \left(\bQ\times [0,1]\bigg/(q,0)\sim (q',0)\right)
  \end{equation*}
  is bijective, and a homeomorphism if we equip the Cartesian product not with its product topology, but rather with the quotient topology resulting from
  \begin{equation*}
    \bQ\times \bQ\times [0,1]
    \xrightarrowdbl{\quad}
    \bQ\times \cC \bQ;
  \end{equation*}
  we write $\bQ\;\wt{\times}\; \cC \bQ$ for that finer, quotient topology on the product. A further (self-)homeomorphism
  \begin{equation*}
    \bQ\;\wt{\times}\; \cC \bQ
    \ni
    \left(q,tq'\right)
    \xmapsto[\quad\cong\quad]{\quad}
    \left(q,t\left(q'-q\right)\right)
    \in
    \bQ\;\wt{\times}\; \cC \bQ
  \end{equation*}
  intertwines the diagonal $\bQ$-action on the domain and the left-hand-factor translation on the codomain, so the claim has been reduced to showing that
  \begin{equation}\label{eq:qqq}
    \bQ\times \left(\bQ\;\wt{\times}\; \cC \bQ\right)
    \xrightarrow{\quad (+) \times \id_{\cC\bQ}\quad}
    \bQ\;\wt{\times}\; \cC \bQ
  \end{equation}
  is discontinuous. To conclude, observe that (slightly paraphrased) \cite[Example 1.5.11]{dvr-bk} confirms even the stronger claim that
  \begin{equation}\label{eq:qqz}
    \bQ\times \left(\bQ\;\wt{\times}\; \cC \bZ_{\ge 0}\right)
    \xrightarrow{\quad (+) \times \id_{\cC\bQ}\quad}
    \bQ\;\wt{\times}\; \cC \bZ_{\ge 0}
  \end{equation}
  is discontinuous (stronger because quotient maps are compatible with passing to closed subsets \cite[Theorem 22.1(1)]{mnk}; a continuous action \Cref{eq:qqq} would thus restrict to one of the form \Cref{eq:qqz}).
\end{proof}

\begin{remarks}\label{res:q.on.cone}
  \begin{enumerate}[(1),wide]
  \item\label{item:res:q.on.cone:act.cone} In reference to the discontinuity of both the diagonal and left-hand-translation actions
    \begin{equation*}
      \bQ
      \times
      \left(
        \bQ\;\wt{\times}\; \cC \bQ
      \right)
      \xrightarrow{\quad}
      \bQ\;\wt{\times}\; \cC \bQ,
    \end{equation*}
    note that this holds also of the obvious action
    \begin{equation}\label{eq:qcq}
      \bQ\times \cC \bQ
      \xrightarrow{\quad\triangleright\quad}
      \cC \bQ
    \end{equation}
    on the quotient-topologized cone. Indeed:
    \begin{itemize}[wide]
    \item Consider an increasing sequence
      \begin{equation*}
        \bR\setminus \bQ
        \ni
        r_n
        \xrightarrow[\quad n\to \pm\infty\quad]{\quad}
        \pm \infty
        ,\quad
        r_{n+1}-r_n
        \xrightarrow[\quad n\to\infty \quad]{\quad}
        0;
      \end{equation*}

    \item Let
      \begin{equation*}
        \bQ
        \xrightarrow{\quad\varphi\quad}
        [0,1]
      \end{equation*}
      be the restriction of a continuous function on $\bR$ vanishing precisely at the $r_n$;

    \item Let $U\subset \cC \bQ$ be the neighborhood of $\bQ\times \{0\}$ in $\bQ\times [0,1]$ (which thus surjects onto a neighborhood of the cone tip) defined as the region
      \begin{equation*}
        \left\{(q,t)\ :\ t<\varphi(q)\right\}
        \subset
        \bQ\times [0,1]
      \end{equation*}
      under the graph of $\varphi$;

    \item It is now an easy check that for no open neighborhoods
      \begin{equation*}
        V\ni \left(\text{any point $p$}\right)\in \bQ\times \{0\}\subset \bQ\times [0,1]
        \quad\text{and}\quad
        W\ni 0\in \bQ
      \end{equation*}
      do we have $W\triangleright V\subset U$.
    \end{itemize}

  \item\label{item:res:q.on.cone:act.cone.enough} Item \Cref{item:res:q.on.cone:act.cone} above in fact provides an alternative proof for \Cref{pr:q.discont.act.join}: were \Cref{eq:qqq} continuous, it would descend to a continuous \Cref{eq:qcq} by collapsing the $\bQ$ factor.

  \item\label{item:res:q.on.cone:lcpct.nnec} While local compactness is certainly \emph{sufficient} (per \Cref{re:not.top.compl}) to ensure that the action of $\bG$ on $\left(E\bG,\tau_{\varinjlim}\right)$ and hence also the individual $\left(E_n\bG,\tau_{\varinjlim}\right)$ is continuous, \Cref{ex:sspace} below shows that it is not \emph{necessary} for the latter to hold.
  \end{enumerate}  
\end{remarks}

We record the more general result that \Cref{res:q.on.cone}\Cref{item:res:q.on.cone:act.cone.enough} and the proof of \Cref{pr:q.discont.act.join} and in fact settle.

\begin{theorem}\label{th:disc.cone}
  If a Hausdorff topological group $\bG$
  \begin{itemize}[wide]
  \item acts discontinuously on its cone $\cC \bG$ with the quotient topology;

  \item or has distinct quotient and product topologies on $\bG\times \cC \bG$
  \end{itemize}
  then it acts discontinuously on its quotient-topologized join $\bG*\bG$, and hence also on $E_n \bG$, $1\le n\le \infty$.  \qedhere
\end{theorem}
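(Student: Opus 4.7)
The idea is to abstract the argument of \Cref{pr:q.discont.act.join}, with left-translation on the quotient-topologized product $\bG\;\wt{\times}\;\cC\bG$ serving as the common intermediary. First I would reduce the discontinuity claim for $\bG*\bG$ to discontinuity of left-translation on $\bG\;\wt{\times}\;\cC\bG$: the closed ``upper half''
\[
  J:=\left\{tq+(1-t)q'\ :\ t\in[\tfrac12,1],\ q,q'\in\bG\right\}\subseteq \bG*\bG
\]
is a closed $\bG$-subset whose subspace topology agrees, via \cite[Theorem 22.1(1)]{mnk}, with the quotient topology from $\bG\times\bG\times[\tfrac12,1]$ modulo $(q,q',1)\sim(q,q'',1)$. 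Reparametrizing $s:=(1-t)/t$ produces a $\bG$-equivariant homeomorphism $J\cong \bG\;\wt{\times}\;\cC\bG$ under which the $\bG$-action on the join becomes the diagonal action; a further self-homeomorphism of $\bG\;\wt{\times}\;\cC\bG$ descended from the homeomorphism $(h,g,t)\mapsto(h,h^{-1}g,t)$ on $\bG\times\bG\times[0,1]$ then intertwines the diagonal action with left-translation on the first factor. Thus continuity of the $\bG$-action on $\bG*\bG$ forces continuity of left-translation on $\bG\;\wt{\times}\;\cC\bG$.

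Next I would observe that the ``unit section'' $\cC\bG\to\bG\;\wt{\times}\;\cC\bG$, $x\mapsto(e,x)$, is continuous (as may be checked by examining preimages through the quotient map $\bG\times\bG\times[0,1]\twoheadrightarrow\bG\;\wt{\times}\;\cC\bG$); this section underlies the two case-specific contradictions. In \textbf{Case A} (discontinuous natural cone action), the projection $\bG\;\wt{\times}\;\cC\bG\twoheadrightarrow\cC\bG$ is a quotient map (retract of the unit section) which is $\bG$-equivariant for the diagonal action on the source and the natural action on the target; undoing the self-homeomorphism, continuity of left-translation would descend through this projection to continuity of the cone action, a contradiction. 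In \textbf{Case B} (product $\ne$ quotient on $\bG\times\cC\bG$), I would compose the unit section with left-translation to obtain a continuous $\bG\times\cC\bG\to\bG\;\wt{\times}\;\cC\bG$, $(g,x)\mapsto(g,x)$, which is the identity on underlying sets; since the quotient topology is always finer than the product, this would force the two to coincide, a contradiction.

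Finally, the extension from $E_1\bG=\bG*\bG$ to $E_n\bG$ for $1<n\le \infty$ follows because $E_1\bG$ is closed in each $E_n\bG$ (and in the colimit $E\bG$) with subspace topology matching the intrinsic quotient topology on $\bG*\bG$ --- again via \cite[Theorem 22.1(1)]{mnk} applied to the quotient presentations \Cref{eq:eens} --- so a continuous action on $E_n\bG$ would restrict to one on $E_1\bG$, contradicting what has just been shown.

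The main ingredient to handle with some care is the chain of topological identifications in the first paragraph (subspace vs. quotient on the truncated join, and the descent of $(h,g,t)\mapsto(h,h^{-1}g,t)$ to a self-homeomorphism of $\bG\;\wt{\times}\;\cC\bG$); the remaining manipulations are close in spirit to \Cref{res:q.on.cone} and should present no serious difficulty.
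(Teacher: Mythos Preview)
Your proposal is correct and follows essentially the same route as the paper, which simply records the theorem as the general statement that the proof of \Cref{pr:q.discont.act.join} together with \Cref{res:q.on.cone}\Cref{item:res:q.on.cone:act.cone.enough} already establish; you have faithfully unpacked those two ingredients (truncated-join $\cong \bG\,\wt{\times}\,\cC\bG$ reduction plus the collapse-to-cone argument) and added the explicit Case~B computation via the unit section. The one place worth tightening is your Case~A phrasing ``descend through this projection'': the bare equivariance of the quotient $\bG\,\wt{\times}\,\cC\bG\twoheadrightarrow\cC\bG$ is not enough on its own (since $\id_\bG\times\text{proj}$ need not be a quotient map), but your unit section supplies exactly the missing piece---compose $\id_\bG\times s$, the diagonal action, and the projection to realize the cone action as a composite of continuous maps.
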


\begin{example}\label{ex:sspace}
  This construction is carried out in \cite[Discussion, p.485]{zbMATH03340311} (also \cite[p.38]{MR133392}): the subgroup
  \begin{align*}
    \bG
    &:=
      \left\{(g_i)_i\in \prod_i \bG_i\ :\ \text{at most countably many }g_i\ne 1\right\}\\
    &\le
      \ol{\bG}
      :=
      \prod_{i\in I}\bG_i
      ,\quad
      \bG_i\text{ compact metrizable groups}.
  \end{align*}
  For uncountable index sets $I$ the group $\bG$ will be non-compact but \emph{countably compact} in the sense \cite[p.19]{ss_countertop} that countable open covers have finite subcovers. The latter property (valid for all powers $\bG^{n}$) will ensure that the subspace and quotient topologies agree on $E_n\bG\subseteq E_n\ol{\bG}$. Indeed, in both cases (the countably-compact $\bG$ as well as the compact $\ol{\bG}$) the quotient and coordinate topologies contrasted in \Cref{item:stop} and \Cref{item:wtop} above agree by \Cref{le:colim.top.eq.coord.top} below. The continuous action
  \begin{equation*}
    E_n\ol{\bG}\times \ol{\bG}
    \xrightarrow{\quad}
    E_n\ol{\bG}
  \end{equation*}
  thus restricts continuously to its $\bG$ analogue, concluding the argument. 
\end{example}

\begin{lemma}\label{le:colim.top.eq.coord.top}
  Let $n\in \bZ_{\ge 0}$ and consider Hausdorff spaces $\left\{X_i\right\}_{i=0}^n$. If the product of any $n$ $X_i$ is countably compact then the colimit and coordinate topologies on the join $X_0*\cdots * X_n$ coincide. 
\end{lemma}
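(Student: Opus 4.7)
Set $P := \prod_{i=0}^n X_i$, $J := X_0 * \cdots * X_n$, and let $q : P \times \Delta^n \twoheadrightarrow J$ be the defining quotient. The plan is to observe $\tau_c \subseteq \tau_q$ for free and prove $\tau_q \subseteq \tau_c$ via a tube lemma that exploits the countable compactness hypothesis.

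The first inclusion is immediate: the barycentric coordinates $c_j$ and vertex projections $\gamma_j$ that generate $\tau_c$ become, upon precomposition with $q$, the standard projections out of $P \times \Delta^n$, which are continuous; so $c_j$ and $\gamma_j$ are $\tau_q$-continuous. For the converse I would fix a $\tau_q$-open $U \ni p$, write $p = \sum_{i \in S} t_i x_i$ with $S = \{i : t_i > 0\}$, and set $Y := \prod_{i \notin S} X_i$, so that $q^{-1}(p) = \{x_S\} \times Y \times \{t\}$ is contained in the open $\tilde U := q^{-1}(U)$. A basic $\tau_c$-neighborhood of $p$ pulls back along $q$ exactly to a ``tube'' $\prod_{i \in S} V_i \times Y \times W$ with $V_i \ni x_i$ open in $X_i$ and $W \ni t$ open in $\Delta^n$ (with $W$ inside the stratum $\{s : s_i > 0 \text{ for all } i \in S\}$). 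So the problem reduces to producing such a tube inside $\tilde U$: a tube lemma for the fiber $Y$ over the base $\prod_{i \in S} X_i \times \Delta^n$.

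The fiber $Y$ is a product of $n+1-|S| \le n$ factors, hence countably compact by hypothesis; I would run a cluster-point argument adapted to countable compactness. Pick a decreasing countable neighborhood base $W_k \ni t$ in the metric space $\Delta^n$, and argue by contradiction. If no tube works, then for each open $V = \prod_{i \in S} V_i \ni x_S$ and each $k$ there is a ``bad'' point $(v, y, s) \in V \times Y \times W_k \setminus \tilde U$; organizing these into a countable sequence $(v_k, y_k, s_k)$, countable compactness of $Y$ extracts a cluster point $y^* \in Y$ of $(y_k)$. Since $(x_S, y^*, t) \in \tilde U$ and $\tilde U$ is open, a basic product neighborhood $A \times B \times C \subseteq \tilde U$ surrounds $(x_S, y^*, t)$; shrinking $V$ inside $A$ and passing to $k$ with $W_k \subseteq C$, the cluster-point condition forces one of the $y_k$ into $B$, whence $(v_k, y_k, s_k) \in A \times B \times C \subseteq \tilde U$, contradicting its badness.

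The main obstacle is the possible failure of first countability of $\prod_{i \in S} X_i$ at $x_S$, which obstructs the naive indexing of the bad points along a countable neighborhood base of $(x_S,t)$ in the full base. My resolution is to drive the sequential machinery through the countable $W_k$-direction alone --- this being available because $\Delta^n$ is metric --- and treat $V$ as a free parameter specialized only at the end to fit inside the witness neighborhood $A$ supplied by the cluster point; countable compactness of $Y$ is then applied precisely to the countable sequence $(y_k)$ produced by this procedure, sidestepping the need for a countable base at $x_S$ in $\prod_{i \in S} X_i$.
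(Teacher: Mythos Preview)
Your setup and the reduction to finding a tube $\prod_{i\in S}V_i\times Y\times W\subseteq\tilde U$ are correct, and you have correctly identified the obstacle: the base $\prod_{i\in S}X_i\times\Delta^n$ need not be first countable at $(x_S,t)$, so the countably-compact tube lemma does not apply directly. But your proposed resolution is circular. To obtain a \emph{countable} sequence $(v_k,y_k,s_k)$ you must commit to some fixed $V$ (the bad points are indexed by pairs $(V,k)$, and there is no canonical way to collapse the $V$-index); the cluster point $y^*$, and hence the witness neighborhood $A\times B\times C$, then depend on that choice. ``Shrinking $V$ inside $A$'' after the fact accomplishes nothing: the already-chosen $v_k$ lie only in the original $V$, with no reason to land in $A$, so $(v_k,y_k,s_k)\in A\times B\times C$ does not follow. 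Re-running the construction with the smaller $V=A$ produces a \emph{new} sequence, a new cluster point, and a new $A'$ bearing no a priori relation to $A$; iterating gives no termination. Countable compactness controls sequences, not the generally uncountable net indexed by the neighborhood filter at $x_S$, and your argument does not bridge that gap.

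The paper sidesteps the issue structurally rather than by a direct tube argument. It inducts on $n$, writing $X_0*\cdots*X_n\cong X*Y$ with $X=X_0$ and $Y=X_1*\cdots*X_n$ (countably compact by the induction hypothesis together with preservation of countable compactness under quotients and products with compact Hausdorff spaces), thus reducing to a binary join of countably compact spaces. It then splits $X*Y$ along the $t$-coordinate into two closed halves whose interiors cover, each homeomorphic in both candidate topologies to $X\times\cC Y$ (respectively $Y\times\cC X$). The coincidence of the quotient and product topologies on $X\times\cC Y$ for countably compact $Y$ is then invoked as a known result of Goebel on continuity of the cone functor. That result is precisely where the delicate point lives; it exploits the specific shape of the cone quotient (collapsing a single slice $Y\times\{0\}$) in a way your generic tube-lemma framework does not capture.
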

\begin{proof}
  We can proceed by induction on $n$, writing
  \begin{equation*}
    X_0*\cdots * X_n
    \cong
    X*Y
    ,\quad
    \begin{aligned}
      X&:=X_0\\
      Y&:=X_1*\cdots*X_n
    \end{aligned}
  \end{equation*}
  with both $X$ and $Y$ countably-compact: 
  \begin{itemize}[wide]
  \item the colimit and coordinate topologies coincide on $Y=X_1*\cdots*X_n$ by the induction hypothesis;

  \item and that single topology on the quotient
    \begin{equation*}
      X_1\times \cdots\times X_n\times [0,1]^{n-1}
      \xrightarrowdbl{\quad}
      Y
    \end{equation*}
    is countably compact, that property being inherited by quotient topologies as well as products with compact Hausdorff spaces \cite[Theorem 5]{MR60212}. 
  \end{itemize}
  It thus suffices to prove the claim for joins $X*Y$ of countably-compact spaces $X,Y$.

  The coordinate function $t$ is in any case continuous (for both topologies) on
  \begin{equation*}
    X*Y = \left\{tx+(1-t)y\ :\ (x,y,t)\in X\times Y\times [0,1]\right\},
  \end{equation*}
  so it suffices to examine either one of the analogous closed subspaces
  \begin{equation*}
    X*Y_{t\le \frac 23}
    \quad\text{and}\quad
    X*Y_{t\ge \frac 13}
    \quad
    \subseteq
    \quad
    X*Y,
  \end{equation*}
  whose interiors cover $X*Y$. Focusing on the latter, we have homeomorphisms
  \begin{equation*}
    X*Y_{t\ge \frac 13}
    \ni
    tx+(1-t)y
    \xmapsto[\quad\cong\quad]{\quad}
    \left(x,\frac{1-t}{t}y\right)
    \in
    X\times \cC Y
  \end{equation*}
  identifying the target space with the product in either the quotient topology induced by $X\times Y\times [0,1]\xrightarrowdbl{\hspace{0pt}}X\times \cC Y$ or the product topology. That these two agree for countably-compact $Y$ follows from a combination of \cite[Lemmas 3 and 4]{MR1992473} in much the same way as the analogous result for plain $\cC Y$ does. 
\end{proof}

\begin{remark}
  In somewhat different language, \cite[Lemmas 3 and 4]{MR1992473} provide a version of \Cref{le:colim.top.eq.coord.top} for cones: for countably-compact $X$ the coordinate and quotient topologies on $\cC X$ coincide. 
\end{remark}

We now record the more general positive result the argument driving \Cref{ex:sspace} helps prove. 

\begin{theorem}\label{th:lb.cc}
  For Hausdorff topological groups $\bG$ with $\bG^n$ countably-compact the $\bG$-action on the universal space $\left(E_n\bG,\tau_{\varinjlim}\right)$ is continuous.
\end{theorem}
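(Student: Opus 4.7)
The plan is to reduce the claim to the coincidence of the colimit and coordinate topologies on $E_n\bG=\bG^{*(n+1)}$, which is exactly what \Cref{le:colim.top.eq.coord.top} supplies under the given hypothesis. The coordinate topology $\tau_w$ will turn out to be the easy setting in which to verify continuity of the $\bG$-action by hand, while the lemma will then transport that continuity to $\tau_{\varinjlim}$.

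First, I would verify directly that the right diagonal action $E_n\bG\times \bG\to E_n\bG$ is continuous with respect to the coordinate topology $\tau_w$. By definition $\tau_w$ is generated by the subbase of preimages $c_m^{-1}(U)$ ($U\subseteq [0,1]$ open) and $\gamma_m^{-1}(W)$ ($W\subseteq \bG$ open), the latter defined on $c_m^{-1}((0,1])$. Right translation by $h\in \bG$ sends $\sum_m t_m g_m$ to $\sum_m t_m(g_m h)$, so that
\begin{equation*}
  c_m(\sigma\cdot h) = c_m(\sigma)
  \quad\text{and}\quad
  \gamma_m(\sigma\cdot h) = \gamma_m(\sigma)\cdot h
  \quad\text{on }c_m^{-1}((0,1])\times \bG.
\end{equation*}
The first factors through the projection $E_n\bG\times \bG\to E_n\bG$ and the second through group multiplication $\bG\times \bG\to \bG$, both manifestly continuous. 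So the action is $\tau_w$-continuous.

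Second, apply \Cref{le:colim.top.eq.coord.top} to the constant family $X_i:=\bG$, $i=0,\ldots,n$: the hypothesis that every $n$-fold subproduct $\prod_{j=1}^n X_{i_j}$ be countably-compact is precisely the assumption $\bG^n$ countably-compact. The lemma then identifies $\tau_{\varinjlim}$ with $\tau_w$ on $E_n\bG$, transporting continuity of the action from one topology to the other.

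There is essentially no obstacle, since the principal content has been concentrated in \Cref{le:colim.top.eq.coord.top}; the theorem is a straightforward consequence obtained by combining that result with the routine continuity check against the subbase defining $\tau_w$. The only mild subtlety worth flagging is the reconciliation of naming conventions: the ``colimit topology'' in the lemma is the quotient topology from $\bG^{n+1}\times \Delta^n\xrightarrowdbl{\hspace{0pt}}\bG^{*(n+1)}$, which agrees with the description \Cref{item:stop} in the opening of \Cref{se:colim.top}.
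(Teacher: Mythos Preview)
Your proof is correct and takes a genuinely different, more elementary route than the paper. The paper argues indirectly: it invokes the theory of locally bounded groups to embed $\bG$ homeomorphically into a locally compact group $\ol{\bG}$, uses local compactness (exponentiability) of $\ol{\bG}$ to get continuity of the action on $\left(E_n\ol{\bG},\tau_{\varinjlim}\right)$, and then argues that $\left(E_n\bG,\tau_{\varinjlim}\right)$ sits inside as a subspace (via \Cref{le:colim.top.eq.coord.top}) so that the action restricts. You instead check directly that the diagonal $\bG$-action is $\tau_w$-continuous via the defining coordinate functions (essentially Milnor's original verification), and then invoke \Cref{le:colim.top.eq.coord.top} once to identify $\tau_w$ with $\tau_{\varinjlim}$. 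Your approach avoids the detour through the embedding theorem for locally bounded groups and is self-contained given the lemma; the paper's route, on the other hand, illustrates the broader principle (also at work in \Cref{ex:sspace}) of inheriting good behavior from an ambient locally compact group.
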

\begin{proof}  
  Recall \cite[p.20]{ss_countertop} that \emph{pseudocompact} spaces are those whose real-valued continuous functions are bounded, and note that (local) countable compactness (of $\bG$ alone) entails (respectively local) pseudocompactness, which in turn implies local \emph{boundedness} \cite[p.267]{MR1326826}: some identity neighborhood can be covered by finitely many translates of any given non-empty open set. The locally-bounded Hausdorff topological groups are precisely \cite[Theorems 1.6 and 1.7]{MR1326826} those homeomorphically embeddable into locally compact groups, hence such an embedding $\bG\le \ol{\bG}$.

  Now, $\ol{\bG}$ acts continuously on \emph{its} iterated joins $E_n\ol{\bG}$ by \Cref{re:not.top.compl} and \begin{equation*}
    \left(E_n\bG,\tau_{\varinjlim}\right)
    \lhook\joinrel\xrightarrow{\quad}
    \left(E_n\ol{\bG},\tau_{\varinjlim}\right)
  \end{equation*}
  is a homeomorphism onto its image by \Cref{le:colim.top.eq.coord.top}. The continuous $\ol{\bG}$-action restricts to that of $\bG$, hence the conclusion.
\end{proof}

\begin{remark}\label{re:pcpct.not.enough}
  Having mentioned pseudocompactness in the course of the proof of \Cref{th:lb.cc}, it is perhaps worth noting that that specific argument would \emph{not} have functioned assuming that property alone: so long as the Hausdorff space $X$ is not countably compact, a countable open cover $X=\bigcup_n U_n$ with no finite subcover would produce a neighborhood
  \begin{equation*}
    \bigcup_n \left(U_n\times \left[0,\frac 1n\right)\right)
    \supseteq
    X\cong X\times \{0\}
    \subseteq
    X\times [0,1]
  \end{equation*}
  descending to one of the cone tip in the quotient but not the coordinate topology.

  Pseudocompact, non-countably-compact groups do exist \cite[pp.38-39]{MR133392}: the subgroup $\bG\le \ol{\bG}:=\left(\bS^1\right)^{\aleph_1}$ consisting of tuples with at most countably many non-root-of-unity components. 
\end{remark}



\addcontentsline{toc}{section}{References}

\def\polhk#1{\setbox0=\hbox{#1}{\ooalign{\hidewidth
  \lower1.5ex\hbox{`}\hidewidth\crcr\unhbox0}}}
  \def\polhk#1{\setbox0=\hbox{#1}{\ooalign{\hidewidth
  \lower1.5ex\hbox{`}\hidewidth\crcr\unhbox0}}}
  \def\polhk#1{\setbox0=\hbox{#1}{\ooalign{\hidewidth
  \lower1.5ex\hbox{`}\hidewidth\crcr\unhbox0}}}
  \def\polhk#1{\setbox0=\hbox{#1}{\ooalign{\hidewidth
  \lower1.5ex\hbox{`}\hidewidth\crcr\unhbox0}}}
  \def\polhk#1{\setbox0=\hbox{#1}{\ooalign{\hidewidth
  \lower1.5ex\hbox{`}\hidewidth\crcr\unhbox0}}}
  \def\polhk#1{\setbox0=\hbox{#1}{\ooalign{\hidewidth
  \lower1.5ex\hbox{`}\hidewidth\crcr\unhbox0}}}

\Addresses

\end{document}